\newcommand{\N}{\mathbb{N}}
\newcommand{\Z}{\mathbb{Z}}
\newcommand{\diag}{\mathop{\mathrm{diag}}\nolimits}
\theoremstyle{plain}
\newtheorem{theorem}{Theorem}[section]
\newtheorem{lemma}[theorem]{Lemma}
\newtheorem{corollary}[theorem]{Corollary}
\theoremstyle{definition}
\theoremstyle{remark}
\newtheorem{remark}[theorem]{Remark}
\newcommand{\E}{{\mathbb{E}}}
\newcommand{\R}{{\mathbb{R}}}
\newcommand{\ind}{\operatorname{1}}
\renewcommand{\epsilon}{\varepsilon}
\renewcommand{\phi}{\varphi}
\newcommand{\ent}{\mathrm{ent}}
\numberwithin{equation}{section}
\begin{document}

\title[Residual dependence in Gibbs measures]{Propagation of chaos and residual dependence in Gibbs measures on finite sets}

\author[Jonas Jalowy]{Jonas Jalowy}

\author[Zakhar Kabluchko]{Zakhar Kabluchko}

\author[Matthias L\"owe]{Matthias L\"owe}
\address[Jonas Jalowy, Zakhar Kabluchko, Matthias L\"owe]{Fachbereich Mathematik und Informatik,
Universit\"at M\"unster,
Einsteinstra\ss e 62,
48149 M\"unster,
Germany}
\email[Jonas Jalowy]{jjalowy@uni-muenster.de}
\email[Zakhar Kabluchko]{zakhar.kabluchko@uni-muenster.de}
\email[Matthias L\"owe]{maloewe@math.uni-muenster.de}

\date{\today}

\subjclass[2020]{Primary: 82B05; Secondary: 82B20}

\keywords{Ising model, Curie-Weiss model, Potts model, mixture distribution, propagation of chaos}

\newcommand{\wlim}{\mathop{\hbox{\rm w-lim}}}
\newcommand{\na}{{\mathbb N}}
\newcommand{\re}{{\mathbb R}}

\newcommand{\vep}{\varepsilon}
\newcommand{\be}{\begin{equation}}
\newcommand{\ee}{\end{equation}}

\begin{abstract}
We compare a mean-field Gibbs distribution on a finite state space on $N$ spins to that of an explicit simple mixture of product measures. This illustrates the situation beyond the so-called {\it increasing propagation of chaos}
introduced by Ben Arous and Zeitouni \cite{BAZ_chaos}, where marginal distributions of size
$k=o(N)$ are compared to product measures.
\end{abstract}

\maketitle

\section{Introduction and main results}
This note is devoted to the approximation of mean-field Gibbs measures $\mu_N$ on a finite state space,
by arguably easier measures, namely mixtures of (random) product measures.
Ben Arous and Zeituoni \cite{BAZ_chaos} show that the marginal distribution of blocks of $k=o(N)$ out of $N$ spins can be approximated well by the product measure in terms of the relative entropy. Thus, the spins become independent (or, 'chaotic'), a phenomenon that is called \emph{increasing propagation of chaos}. Here, \emph{increasing} refers to the block sizes going to infinity, as $N \to \infty$. As we will see in this note,
this cannot continue to be true, if the blocks are larger than $o(N)$.
That being the case, it is natural to ask what happens after the chaos stops to propagate and how the marginals can be approximated instead. A solution to this
problem on finite state spaces will be the main result of the present note. The resulting approximating measures $\nu_N$ of our study describe the residual dependence between the spins in the full Gibbs measure.

As the random variables we investigate are exchangeable due to their mean-field structure,
de Finetti's Theorem would imply the existence of a non-explicit infinite sequence of such random variables
as mixtures of product measures. However, our sequences are finite. For finite sequences of random variables,
a related goal as ours was pursued in \cite{BAYE23}, \cite[\S 5]{KirschSurvey} or \cite{LST07}. However, these works do not aim to connect the results to the so-called increasing propagation of chaos introduced in \cite{BAZ_chaos}, and focus on an explicit derivation of the mixing de Finetti measure of special models, in particular the (high temperature) Curie-Weiss model.

To be more precise, we will study a setup that slightly differs from the framework given in \cite{BAZ_chaos}.
Our mathematical framework is the following: We consider a general mean field model of a spin system of $N$ spins, which can
take values in a finite set $\{1,\dots,q\}$, where $q \ge 2$. 
Define the ''magnetization''
\begin{align*}
    m_N: \{1,\dots,q\}^N&\to \Delta:=\{m\in [0,1]^q:|m|=\sum_{k=1}^qm_k=1\}\\
    m_N(\sigma)&:=\Big(\frac 1 N \sum_{i=1}^N \mathbbm 1_k(\sigma_i)\Big)_{k=1,\dots,q}, \qquad \sigma=(\sigma_i)_{i=1}^N \in \{1,\dots,q\}^N
\end{align*}
which we assume to be an order parameter for the Gibbs measure.
That is, for a fixed function $F\in\mathcal C^2(\Delta)$, the Gibbs measure shall be given by
\begin{equation}\label{eq:gibbsmeasure1}
\mu_N(\sigma)=\frac 1 {Z_N} \exp\big( N F(m_N(\sigma))\big), \qquad \sigma=(\sigma_i)_{i=1}^N \in \{1,\dots,q\}^N \end{equation}
Here, the partition function $Z_N$ is given by
$$
Z_N= \sum_{\sigma \in \{1,\dots,q\}^N} \exp\big( N F(m_N(\sigma))\big).
$$
Examples of such measures are mean-field Potts models (see e.g.\ \cite{KestenSchonmann89}), $p$-spin models (see \cite{p-spin}) and especially,
the mean-field Ising model (or Curie-Weiss) model for $q=2$ and $F(x)=\beta (1-2x_1)^2/2$ for some inverse temperature $\beta>0$.
The Curie-Weiss Gibbs measure as a distribution on $\{\pm 1\}^N$ is given by $\mu_N(\sigma):= \frac 1 {Z_N}\exp\left(\frac{\beta}{2N} \sum_{i,j=1}^N \sigma_i \sigma_j\right)$
and we will study it in detail in Section \ref{sec:CW}.

Before we return to the general framework, let us forecast the most simple case of our main result:
Let $0<\beta<1$ and $\tilde\xi\sim\mathcal N (0,\frac {\beta} {1-\beta})$ be a Gaussian random variable. We will truncate $\tilde \xi$
at height $N^\delta$ for some small $\delta>0$ to obtain $\xi=\tilde\xi \mathbbm 1_{|\tilde\xi|\le N ^\delta}$. This technical truncation serves the sole purpose to
ensure that the following measures remain non-negative.  Then, Corollary \ref{cor:main1a} states that the mixed product measure
\begin{align*}
\nu_N(\sigma)=2^{-N}\E_\xi\Big[\prod_{i=1}^N \big( 1+\sigma_i\frac{\xi}{\sqrt N}\big)\Big]
\end{align*}
approximates the Curie-Weiss Gibbs measure, i.e. $H(\mu_N|\nu_N)\to 0 $ as $N\to\infty$. The guiding idea for choosing the mixing factors is to choose them in such a way that
the measure $\nu_N$ has the same asymptotic variance as the Gibbs measure $\mu_N$.

Turning back to a general Gibbs measure $\mu_N$ on the finite set $\{1,\dots,q\}$, let us now describe the setting. We postulate that $m_N$ is a sufficient statistic (or an `order parameter' as described before)
and equivalently consider the pushforward of the measure $\mu_N$ under
the mapping $\sigma \mapsto m_N(\sigma)$, i.e.\ the probability measure $\tilde \mu_N$ on the simplex $\Delta$ given by:
\[\tilde \mu_N(m)=\frac 1 {Z_N} \binom{N}{Nm_1,\dots,Nm_q} \exp\big( N F(m)\big), \qquad m \in \Delta. \]
Similarly, we may represent the partition function as
$$
Z_N=\sum_{m\in \Delta\cap \frac 1 N \Z^q}\binom{N}{Nm_1,\dots,Nm_q} \exp\big( N F(m)\big).
$$
Note that by Stirling's formula
\begin{align}\label{eq:Stirling}
\frac 1N \log \binom{N}{Nm_1,\dots,Nm_q} \sim \ent(m),
\end{align}
where for two sequences $a_N$ and $b_N$ we write $a_N \sim b_N$, if and only if $a_N/b_N \to 1$ as $N \to \infty$ and
$$\ent(m)=-\sum_{k=1}^q m_k\log(m_k)$$ is the entropy function.

Since the simplex $\Delta$ is $q-1$ dimensional and therefore $q-1$ coordinates determine the value of $m$,
in the sequel we shall restrict ourselves to the first $q-1$ coordinates of $m$ and denote
\[\hat m=(m_k)_{k=1,\dots,q-1}\in \hat\Delta:=\{\hat m\in [0,1]^{q-1}: |\hat m| \le 1\}.\]
It is well known that the asymptotics of $m_N$ under the Gibbs measure $\mu_N$ are governed
by the global maximum points $\sup_{\hat \Delta} G$ of the (up to constants) Helmholtz free energy function
\begin{align}
    G(\hat m):=F(m)+\ent(m),
\end{align}
which we shall view as a function of the first $q-1$ coordinates only.
In particular Varadhan's lemma, Theorem III.13 in \cite{FdHLarge}, shows
\be \label{eq:logZN}
\lim_{N \to \infty} \frac 1 N \log Z_N = \sup_{\hat {\mathfrak m} \in\hat \Delta} G(\hat {\mathfrak m}).
\ee
Moreover, by the tilted large deviation principle \cite[Theorem III.17]{FdHLarge}, $m_N$ satisfies a large deviation principle
under the sequence of Gibbs measures $\mu_N$ with speed $N$ and rate function
$$
I(m) = -G(\hat m)+\sup_{\hat{\mathfrak{m}} \in \hat \Delta} G(\hat{\mathfrak{m}}).
$$
We assume that the so defined $G$ has the following two properties:
\begin{enumerate}
    \item[(A)] $G$ has $p\in\N$ many isolated global maximizers $M_1,\dots,M_p$ that lie in the interior of $\hat\Delta$ and
    \item[(B)] $G$ has a non-degenerate Hessian in these maximizers, i.e. $H_j:=-\mathrm{Hess}_G(M_j)>0$ (i.e.\ is positive definite)
		for each $j=1,\dots, p$.
\end{enumerate}

Denote the coordinates of maximizer $M_j$ by $M_{j,k}$, $k=1, \ldots, q-1$ and $j=1, \ldots, p$.
With a slight abuse of notation, we write $M_{j,q}=1-|M_j|$ for $j=1,\dots, p$. Under these assumption, an application of Laplace's
method implies we may refine the asymptotics \eqref{eq:logZN} of the partition function. Even though this can, in principle, also be
derived from Theorem B in
\cite{BB90}, who treat a more general situation similar to the one in \cite{BAZ_chaos}, for completeness we give a direct proof of the following lemma
in Section 2.
\begin{lemma}\label{lem:partitionfunction}
As $N\to \infty$ it holds
\[Z_N\sim |w| \exp(N\mathrm{sup}_{\hat \Delta} G),\]
where the weight is given by $| w|=\sum_{j=1}^pw_j$ for $w_j=\big( \det(H_j)\prod_{k=1}^q M_{j,k}\big)^{-1/2}$.
\end{lemma}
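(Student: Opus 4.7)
The plan is to prove Lemma \ref{lem:partitionfunction} by the Laplace method applied directly to the sum defining $Z_N$. First I would sharpen the crude Stirling asymptotics \eqref{eq:Stirling} to the quantitative form
\[
\binom{N}{Nm_1,\dots,Nm_q}=\frac{\exp(N\,\ent(m))}{(2\pi N)^{(q-1)/2}\sqrt{\prod_{k=1}^q m_k}}\,\bigl(1+O(1/N)\bigr),
\]
uniformly over $m$ in any compact subset of the relative interior of $\Delta$. Inserted into $Z_N$, this gives each interior summand as $(2\pi N)^{-(q-1)/2}(\prod_k m_k)^{-1/2}e^{NG(\hat m)}(1+o(1))$, which is the form needed for Laplace.

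Next I would localize. By (A) there are only finitely many maximizers $M_1,\dots,M_p$, all in $\interior\hat\Delta$; pick $\delta>0$ so small that the balls $B_j:=B(M_j,\delta)$ are disjoint subsets of $\interior\hat\Delta$. Compactness of $\hat\Delta$ and continuity of $G$ yield $\eta>0$ with $G\le\sup G-\eta$ on $\hat\Delta\setminus\bigcup_j B_j$. Since there are $O(N^{q-1})$ lattice points in $\hat\Delta\cap\tfrac1N\Z^{q-1}$ and each summand is dominated by $Ce^{NG(\hat m)}$ (from the Stirling bound), the contribution from outside the balls is $O(N^{q-1}e^{N(\sup G-\eta)})=o(e^{N\sup G})$ and may be dropped.

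On each $B_j$, I would Taylor-expand using (B),
\[
G(\hat m)=\sup G-\tfrac12(\hat m-M_j)^\top H_j(\hat m-M_j)+R_j(\hat m-M_j),\qquad R_j(y)=o(|y|^2),
\]
and $\sqrt{\prod_k m_k}=\sqrt{\prod_k M_{j,k}}\,(1+O(|\hat m-M_j|))$ by continuity. After the change of variables $x=\sqrt N(\hat m-M_j)$, the lattice $\tfrac1N\Z^{q-1}$ becomes a lattice of spacing $1/\sqrt N\to 0$ in $x$, so the sum over $B_j\cap\tfrac1N\Z^{q-1}$ is a Riemann sum for
\[
\frac{e^{N\sup G}}{(2\pi N)^{(q-1)/2}\sqrt{\prod_k M_{j,k}}}\cdot N^{(q-1)/2}\!\int_{\R^{q-1}}\!e^{-\frac12 x^\top H_j x}\,\dd x,
\]
the factor $N^{(q-1)/2}$ being the density of the $1/\sqrt N$-lattice. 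The Gaussian integral equals $(2\pi)^{(q-1)/2}/\sqrt{\det H_j}$, all powers of $N$ and $2\pi$ cancel, and the local contribution equals $w_j\,e^{N\sup G}(1+o(1))$. Summation over $j=1,\dots,p$ then yields $Z_N\sim|w|\exp(N\sup G)$.

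The main obstacle is the last step: since the ratio of prefactors $N^{(q-1)/2}/(2\pi N)^{(q-1)/2}=(2\pi)^{-(q-1)/2}$ is of order one, no polynomial slack is available and the Riemann-sum error stemming from the remainder $R_j$ must truly be $o(1)$. With only $F\in\mathcal{C}^2(\Delta)$ one has $R_j(y)=o(|y|^2)$ but not $O(|y|^3)$, so the standard fix is a double cutoff inside $B_j$: restrict first to $|\hat m-M_j|\le N^{-1/4}$, where $NR_j(\hat m-M_j)=o(|x|^2)$ and dominated convergence delivers the Gaussian integral; then control the annulus $N^{-1/4}\le|\hat m-M_j|\le\delta$ using the uniform bound $NG(\hat m)\le N\sup G-c\sqrt N$ (from $H_j>0$), which renders that part exponentially negligible.
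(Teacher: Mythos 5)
Your proposal is correct and follows essentially the same route as the paper's proof: a refined Stirling expansion of the multinomial coefficient, localization to small neighbourhoods of the maximizers $M_1,\dots,M_p$, and a Riemann-sum/Laplace (Gaussian) evaluation of the local contributions, which yields exactly the weights $w_j=\big(\det(H_j)\prod_{k=1}^q M_{j,k}\big)^{-1/2}$. The only differences are minor: you localize via a direct lattice-point count with the entropy bound instead of appealing to the large deviation principle, and you spell out (through the inner cutoff at $N^{-1/4}$ and the stretched-exponential bound on the annulus) the treatment of the merely $o(\lVert y\rVert^2)$ Taylor remainder that the paper subsumes under ``a standard application of Laplace's method.''
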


In our main result we consider the $k$-marginals of the measure $\mu_N$, where $k=k(N)$ may depend on $N$ (by exchangeability
of the $(\sigma_i)_{i=1}^N$ the specific choice of the $k$ coordinates is unimportant).
This main result can be understood as a generalized form of the increasing propagation of chaos result in the
sense of \cite{BAZ_chaos,LLP10}, where $k$ is even allowed to be of size $\alpha N$, $0 \le \alpha \le 1$.
This allows us to describe the residual dependence between the spins $(\sigma_i)$ in our model.

More precisely, we wish to approximate $\mu_N$ by a mixture $\nu_N$ of product measures. The $j$'th summand ($j=1, \ldots, p$) of this mixture
is a product measure that is averaged over some additional
$q$-dimensional random vector $\xi^{(j)}=(\xi^{(j)}_k)_{k=1, \ldots q}$. To make this more precise, we define
\begin{align}\label{eq:nu}
\nu_N(\sigma):= \frac{1}{| w|}\sum_{j=1}^p w_j\E_\xi\Big[\prod_{i=1}^N\sum_{k=1}^q \Big(M_{j,k}+\frac{\xi_k^{(j)}}{\sqrt N}\Big)\mathbbm 1_{k}(\sigma_i)\Big],
\end{align}
whose pushforward under $m_N$ is given by
\begin{align}\label{eq:nutilde}
\tilde\nu_N(m):= \frac{1}{| w|}\sum_{j=1}^p w_j\binom{N}{Nm_1,\dots,Nm_q}\E_\xi\Big[\prod_{k=1}^q \Big(M_{j,k}+\frac{\xi_k^{(j)}}{\sqrt N}\Big)^{Nm_k}\Big].
\end{align}
To specify the distribution of the randomness is $\nu_N$: The mixture in $\nu_N$ is incorporated by (independent) non-degenerate $(q-1)$ Gaussian random vectors
\begin{align}\label{eq:Gaussian_distribution}
    \tilde \xi^{(j)}\sim \mathcal N_{q-1}(0,\Sigma),\quad \Sigma=\Sigma^{(j)}=H_j^{-1}-\diag(M_j)+M_jM_j^T.
\end{align}
These Gaussian random vectors are truncated at a Euclidean norm of $N^{\delta}$ to obtain
$$\xi^{(j)}=\tilde\xi^{(j)}\mathbbm 1 _{\lVert\tilde\xi^{(j)}\rVert\le {N^{\delta}}}$$
for some $\delta>0$ to be chosen later. This truncation ensures that $\nu_N$ is positive (and smaller than $1)$.
We complete $\xi^{(j)}_k, k=1, \ldots q-1$ by a $q$-th coordinate  $\xi_q^{(j)}=-\sum_{k=1}^{q-1}\xi_k^{(j)}$ such that $\sum_{k=1}^q\xi_k^{(j)}=0$.
Note that each summand in \eqref{eq:nu} only differs from the product measure $M_j^{\otimes N}$ on $\{1,\dots,q\}^N$ by artificially mixing over $\xi^{(j)}_k, k=1, \ldots q$
and we shall see that
it is mixed precisely in such a way that the covariance matrix of $\tilde\nu_N$ matches asymptotically the covariance of $\tilde \mu_N$. E.g.~the covariance matrix in the
multivariate CLT for the multinomial distribution (that is the push-forward of $M_j^{\otimes N}$ under $m_N$) is given by $\diag(M_j)-M_jM_j^T$. Thus, the propagation of chaos
stops for marginals of size $k\sim \alpha n$ as we shall prove in Corollary \ref{cor:chaos_stops}, generalizing the ideas of \cite{JKLM}.

We will quantify the distance between probability measures $\varrho$ and $\pi$ on the same probability space in terms of relative entropy or Kullback-Leibler information defined by
\be \label{eq:KL}
H(\varrho|\pi):= \begin{cases}
\int \log\left(\frac{d\varrho}{d\pi}\right)d\varrho & \mbox{if } \varrho \ll \pi \\
\infty & \mbox{otherwise.}
\end{cases}
\ee
With these definitions, our central result reads as follows:
\begin{theorem}\label{theorem:main_gen}
As $N \to \infty$ the mixture $\nu_N$ approximates the Gibbs measure $\mu_N$ in relative entropy, i.e.\
\[
H(\mu_N|\nu_N)\to 0 \qquad \text{as well as}\qquad H(\nu_N|\mu_N)\to 0.\]
\end{theorem}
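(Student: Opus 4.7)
The plan is to transport the problem to the simplex via the pushforward under $m_N$, derive matching Laplace (local-CLT) asymptotics for the pushforwards $\tilde\mu_N$ and $\tilde\nu_N$ around every maximiser of $G$, and upgrade the resulting pointwise convergence of ratios to relative-entropy convergence through a central/tail split. The very first step is that both $\mu_N$ and $\nu_N$ are functions of $m_N(\sigma)$ alone: for $\nu_N$ this follows from
\[
\prod_{i=1}^N\sum_{k=1}^q\Big(M_{j,k}+\tfrac{\xi_k^{(j)}}{\sqrt N}\Big)\mathbbm 1_k(\sigma_i)=\prod_{k=1}^q\Big(M_{j,k}+\tfrac{\xi_k^{(j)}}{\sqrt N}\Big)^{Nm_k(\sigma)},
\]
so by exchangeability $H(\mu_N|\nu_N)=H(\tilde\mu_N|\tilde\nu_N)$ and $H(\nu_N|\mu_N)=H(\tilde\nu_N|\tilde\mu_N)$. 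From now on we work on $\Delta\cap\tfrac1N\Z^q$.

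Near each maximiser $M_j$, introduce the scaled coordinate $\hat x=\sqrt N(\hat m-M_j)$. The refined Stirling formula $\binom{N}{Nm_1,\ldots,Nm_q}=(2\pi N)^{-(q-1)/2}(\prod_k m_k)^{-1/2}e^{N\ent(m)}(1+o(1))$, the Taylor expansion $NG(\hat m)=NG(M_j)-\tfrac12\hat x^T H_j\hat x+O(N^{-1/2}|\hat x|^3)$, and Lemma \ref{lem:partitionfunction} yield, uniformly for $\hat x$ in compacts,
\[
\tilde\mu_N(m)=\frac{w_j}{|w|\,N^{(q-1)/2}}\varphi_{H_j^{-1}}(\hat x)(1+o(1)),
\]
where $\varphi_\Sigma$ denotes the centred Gaussian density on $\R^{q-1}$ with covariance $\Sigma$. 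For $\tilde\nu_N$, each summand is the law of $m_N$ under a product measure whose success probabilities are themselves randomised to $M_j+\xi^{(j)}/\sqrt N$. A conditional local central limit theorem for the multinomial, integrated over the Gaussian $\xi^{(j)}$, produces a Gaussian with total covariance $\Sigma^{(j)}+\diag(M_j)-M_jM_j^T=H_j^{-1}$ --- this identity is the very reason for the choice \eqref{eq:Gaussian_distribution} --- and hence
\[
\tilde\nu_N(m)=\frac{1}{|w|\,N^{(q-1)/2}}\sum_{\ell=1}^p w_\ell\,\varphi_{H_\ell^{-1}}\!\bigl(\sqrt N(\hat m-M_\ell)\bigr)(1+o(1))
\]
in the same uniform sense. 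The truncation at $\|\tilde\xi^{(j)}\|\le N^\delta$ strips off only super-polynomially small Gaussian mass and so leaves the leading order intact.

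For $N$ large the $N^{-1/2+\delta'}$-neighbourhoods $U_N^{(j)}$ of the isolated maximisers are pairwise disjoint, and on each $U_N^{(j)}$ only the $\ell=j$ summand in the expansion of $\tilde\nu_N$ contributes at leading order. Together the two expansions give $\tilde\mu_N/\tilde\nu_N=1+o(1)$ uniformly on $U_N:=\bigcup_j U_N^{(j)}$, so the contribution of $U_N$ to both of $H(\tilde\mu_N|\tilde\nu_N)$ and $H(\tilde\nu_N|\tilde\mu_N)$ vanishes. Off $U_N$, the tilted large deviation principle with rate function $I$ makes $\tilde\mu_N$ exponentially small in $N$, while standard Gaussian concentration inside each randomised multinomial makes $\tilde\nu_N$ similarly small; combined with the crude lower bounds $\tilde\mu_N,\tilde\nu_N\ge e^{-CN}$ valid on the whole state space (thanks to $F\in\mathcal C^2(\Delta)$ and to the truncation keeping $\nu_N$ strictly positive), the log-ratio off $U_N$ is at most linear in $N$ and is killed by the exponential smallness of the dominating measure.

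The main technical obstacle is to coordinate the three scales involved --- the truncation radius $N^\delta$ of $\tilde\xi^{(j)}$, the Laplace neighbourhood radius $N^{-1/2+\delta'}$ (with $\delta'>\delta$), and the exponential tail bounds --- so that the expansion of $\tilde\nu_N$ is genuinely uniform in $\hat m$ and in $\xi^{(j)}$ throughout $U_N$, and so that on the complement the linear-in-$N$ bound on the log-ratio is actually dominated by the exponential decay of both measures; it is precisely to guarantee the latter that the truncation is needed, since it prevents $\tilde\nu_N$ from decaying super-exponentially even far from the maximisers.
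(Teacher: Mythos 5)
Your architecture is the same as the paper's (reduce to the pushforwards via sufficiency of $m_N$, show $\tilde\mu_N/\tilde\nu_N\to1$ uniformly on shrinking neighbourhoods of the maximizers — you correctly identify the key identity $\Sigma^{(j)}+\diag(M_j)-M_jM_j^T=H_j^{-1}$ behind the paper's Lemma \ref{lem:dichtequotient} — and beat an $O(N)$ log-ratio by smallness of the dominating measure outside). But your coordination of the scales is backwards, and with it the central step fails as stated. You truncate $\tilde\xi^{(j)}$ at radius $N^{\delta}$ and take $U_N^{(j)}$ of radius $N^{-1/2+\delta'}$ with $\delta'>\delta$. For $\hat m$ with $\hat x=\sqrt N(\hat m-M_j)$ of norm of order $N^{\delta'}\gg N^{\delta}$, the optimal mixing value in the convolution (roughly $\Sigma(\Sigma+\diag(M_j)-M_jM_j^T)^{-1}\hat x$, of norm comparable to $\|\hat x\|$) lies outside the truncation ball, so the truncated mixture cannot reproduce $\varphi_{H_j^{-1}}(\hat x)$: on the annulus $N^{\delta}\lesssim\|\hat x\|\le N^{\delta'}$ one has $\tilde\nu_N(m)$ smaller than your Gaussian prediction by a factor that is exponentially small in a power of $N$, and $\tilde\mu_N/\tilde\nu_N$ diverges there rather than tending to $1$. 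Since your treatment of $U_N$ rests entirely on the uniform $1+o(1)$ claim, the contribution of this annulus to both entropies is left uncontrolled. The correct requirement is the opposite inequality: the central window must have radius $o(N^{-1/2+\delta})$, strictly inside the truncation scale, which is precisely the hypothesis of Lemma \ref{lem:dichtequotient} and the reason the paper's proof of the theorem chooses a slightly smaller exponent for the neighbourhoods than for the truncation. Relatedly, asserting the expansion of $\tilde\mu_N$ ``uniformly for $\hat x$ in compacts'' is not enough — a compact window leaves a non-vanishing amount of $\tilde\mu_N$-mass outside $U_N$; you need uniformity on the growing window, which your cubic error $O(N^{-1/2}\|\hat x\|^3)$ does provide, but only under a constraint like $\delta<1/6$ that you never impose.

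The second problem is the tail bound for $\tilde\mu_N$ off $U_N$. The tilted LDP has speed $N$ and an $e^{o(N)}$ error in its upper bound, while on the annulus $\{N^{-1/2+\delta'}<\|\hat m-M_j\|<\epsilon\}$ the infimum of the rate function is only of order $N^{-1+2\delta'}$; so the LDP gives nothing useful there, and $\tilde\mu_N$ is certainly not ``exponentially small in $N$'' on that set — it is only of order $e^{-cN^{2\delta'}}$. One needs the refined Stirling/Laplace estimate, as in the paper's bound \eqref{eq:mu_MDP} combined with the local behaviour of $G$ near the $M_j$, to get $\tilde\mu_N(m)\le Ce^{-cN^{2\delta}}$ on the shrinking annuli (and, for $H(\tilde\nu_N|\tilde\mu_N)$, the analogous quantitative bound $\tilde\nu_N(m)\le e^{-cN^{2\delta}}$ via the relative-entropy representation of the multinomial weights, not just ``Gaussian concentration''). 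These stretched-exponential bounds still dominate the $O(N)$ log-ratio, so the strategy is sound and, once the window is shrunk to $o(N^{-1/2+\delta})$ and the annuli are absorbed into the tail regime with these refined bounds, your argument becomes essentially the paper's proof; as written, however, both the uniform-ratio step and the tail estimate have genuine gaps.
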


\begin{remark}
This answers the motivating question of approximating the $k$-marginals $\mu_N^{(k)}$ by $\nu_N^{(k)}=\nu_k$ even for $k/N\to\alpha$. Indeed, by Jensen's inequality it holds $H(\mu_N^{(k)}|\nu_k)\to 0$.
\end{remark}

\begin{remark}\label{rem:quantification}
It can be deduced from the proof that the rate of convergence in the above approximation is of order $\mathcal O(N^{-1/2+\epsilon})$ for any small $\epsilon>0$. We did not aim to optimize this rate quantitatively.
\end{remark}

\begin{remark}\label{rem:generalization}
Assumption (B) can be weakened to the following. For each $j=1,\dots,p$ let $G\in\mathcal C^\ell$ for some $\ell\ge 2$ such that its first $\ell-1$ derivatives vanish at a maximizer $M_j$ and
$$G(\hat m)=G(M_j)+G^{\{\ell\}}(\hat m -M_j)+o(\lVert\hat m-M_j\rVert^\ell)$$
for some function $G^{\{\ell\}}$ that is positive away from zero. Then, Theorem \ref{theorem:main_gen} continues to hold for a mixed measure $\nu_N$ similar to \eqref{eq:nu}, where the truncated Gaussian $\xi^{(j)}_k/\sqrt N$ is replaced by $\xi^{(j)}_k/N^{1/\ell}$ and the random variables $\xi^{(j)}_k$ have a distribution
with tails of order $\exp(-c\lVert x\rVert^\ell)$. Our assumption (B) corresponds to $\ell=2$ and the well-known example for $\ell=4$ is given by the Curie Weiss model at critical temperature, which we shall explicitly work out in Theorem \ref{theo:mainCW1b} below. In the general degenerate situation, there is no Central Limit Theorem for $m_N$ and therefore also another approximation of the Gibbs measure by mixtures of product measures must be found. However, these depend very much on the specific form of the Hessian in the maximizers.
Nevertheless, the proof of the general claim is an adjustment of the proof we shall present, however for reasons of simplicity and accessibility, we leave the details to the (ambitious) reader.
\end{remark}

We organize the rest of this paper in the following way: In Section 2 we will prove Theorem \ref{theorem:main_gen}. In Section 3 we will comment on the relation of our result to the (by now classical) increasing propagation of chaos in the sense of \cite{BAZ_chaos}.
In Section 4, we will treat a specific case, the
Curie-Weiss model, where an approximation of the Gibbs measure by mixtures of product measures can explicitly be given even in the case of critical inverse temperature $\beta$.

\section{Proof of Theorem \ref{theorem:main_gen}}\label{sec:General}

Let us begin with a straightforward proof of Lemma \ref{lem:partitionfunction}.

\begin{proof}
Consider the open $\vep$-neighborhoods $\mathcal{N}_j$ of the maximizers $M_j$
$$
\mathcal{N}_j:=\left\{m\in \Delta: \sqrt{\sum_{i=1}^q \left(m_i-M_{i,j}\right)^2}< \vep\right\}, \quad j=1,\ldots,p
$$
where $\vep>0$ is chosen so small that $\mathcal{N}_j\subset \Delta$ for all $j=1,\ldots,p$. The above mentioned large deviations
principle for $m_N$ under the Gibbs measure shows that the $m \notin \bigcup_{j=1}^p \mathcal{N}_j$ have asymptotically
negligible contribution to $Z_N$.

Now, for $m \in \bigcup_{j=1}^p \mathcal{N}_j$ by Stirling's formula
$$
\log (n!)= n \log n - n +\frac 12 \log(2 \pi) + \frac 12 \log ( n  )+\mathcal{O}(1/n),
$$
we obtain a refinement of
\eqref{eq:Stirling}, i.e.
\begin{align}\label{eq:Stirling_exakt}
\frac 1N \log \binom{N}{Nm_1,\dots,Nm_q} \sim \ent(m) +\mathcal{O}\left(\frac 1N \right)
\end{align}
with an explicit error term that is uniform on $\bigcup_{j=1}^p \mathcal{N}_j$.
Thus with the definition $\mathcal{N}:=\bigcup_{j=1}^p \mathcal{N}_j$ and again with
a $\mathcal{O}\left(\frac 1N \right)$-term that is uniform on $\mathcal{N}$
\begin{align*}
Z_N&=\sum_{m\in \Delta\cap \frac 1 N \Z^q}\binom{N}{Nm_1,\dots,Nm_q} \exp\big( N F(m))\big)\\
&=\sum_{m\in \mathcal{N}\cap \frac 1 N \Z^q}\binom{N}{Nm_1,\dots,Nm_q} \exp\big( N F(m))\big)(1+o(1))\\
&=\sum_{m\in \mathcal{N} \cap \frac 1 N \Z^q}\exp\big(N\big(F(m)+\ent(m)\big)+\mathcal O\left(\frac 1N\right)\big)\frac{1}{\sqrt{(2\pi N)^{q-1} \prod_{k=1}^q m_k }}(1+o(1))\\
&\sim\int_{\mathcal{N}}\exp\big(NG(x)\big)\frac{N^{(q-1)/2}}{\sqrt{(2\pi)^{q-1} \prod_{k=1}^q x_k }} dx\\
&\sim\int_{\hat\Delta}\exp\big(NG(x)\big)\frac{N^{(q-1)/2}}{\sqrt{(2\pi)^{q-1} \prod_{k=1}^q x_k }} dx,
\end{align*}
where we used Riemann approximation of the integral and we wrote $x_q=1-\sum_{k=1}^{q-1}x_k$. Now, a standard application of Laplace's method shows that
\begin{align*}
Z_N\sim \sum_{j=1}^p  \exp\big(NG(M_j)\big) \frac 1 {\sqrt{H_j \prod_{k=1}^q M_{j,k}}}= |w| \exp(N\mathrm{sup}_{\hat \Delta} G)
\end{align*}
as claimed.
\end{proof}

The crucial part of the proof of Theorem \ref{theorem:main_gen} is to show that the relative density
$\frac{d\tilde\mu_N}{d\tilde{\nu_N}}(m)$ converges to $1$ for all important arguments $\hat m$, i.e.\ for those $\hat m$ that are
close to a maximizer $M_j$.
This will be done in the following lemma.
\begin{lemma}\label{lem:dichtequotient}
Let $0<\delta<1/6$ and fix $J=1,\dots,p$. Uniformly in $m\in\Delta$ with $\lVert M_J-\hat m\rVert=o (N^{-1/2+\delta})$ it holds \[\frac{d\tilde\mu_N}{d\tilde \nu_N}(m)=1+o(1).\]
\end{lemma}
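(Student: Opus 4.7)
The plan is to cancel the common multinomial coefficient $\binom{N}{Nm_1,\dots,Nm_q}$ in $\tilde\mu_N(m)$ and $\tilde\nu_N(m)$ and invoke Lemma~\ref{lem:partitionfunction} to replace $Z_N$ by $|w|\exp(N\sup G)(1+o(1))$. After this the identity to prove becomes
\[
\frac{d\tilde\mu_N}{d\tilde\nu_N}(m)\;\sim\;\frac{\exp(NF(m)-N\sup G)}{\sum_{j=1}^{p} w_j\,\E_\xi\bigl[\prod_{k=1}^{q}(M_{j,k}+\xi_k^{(j)}/\sqrt N)^{Nm_k}\bigr]}\;=\;1+o(1)
\]
uniformly on $\{m:\|\hat m-M_J\|=o(N^{-1/2+\delta})\}$. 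I would first dispose of the $j\neq J$ summands in the denominator: pulling out the prefactor $\exp(N\sum_k m_k\log M_{j,k})$, Gibbs' inequality yields $\sum_k M_{J,k}\log M_{j,k}<\sum_k M_{J,k}\log M_{J,k}=-\ent(M_J)$ whenever $j\neq J$, and the truncation $\|\xi^{(j)}\|\le N^\delta$ perturbs this prefactor only by a factor $\exp(O(N^{\delta}))$, which is still exponentially beaten by the $j=J$ contribution.

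Next I would Taylor-expand both numerator and the surviving $j=J$ denominator in the natural scale $u=\sqrt N(m-M_J)$ with $\|u\|=o(N^\delta)$ and its restriction $\hat u=(u_1,\dots,u_{q-1})$. Using $F=G-\ent$, assumption (B) for $G$, and the Taylor expansion of the entropy on $\Delta$ at $M_J$,
\[
NF(m)-N\sup G = -\tfrac12\hat u^T H_J\hat u \;-\; N\ent(M_J) \;+\; \sqrt N\sum_{k=1}^{q} u_k\log M_{J,k} \;+\; \tfrac12\hat u^T B_J\hat u + o(1),
\]
where $B_J:=\diag(1/M_{J,k})_{k<q}+M_{J,q}^{-1}\mathbf{1}\mathbf{1}^T$ is (minus) the Hessian of the entropy at $\hat M_J$ in simplex coordinates and satisfies $\det B_J=(\prod_k M_{J,k})^{-1}$ by the matrix determinant lemma. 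A second Taylor expansion of $\sum_k Nm_k\log(M_{J,k}+\xi_k^{(J)}/\sqrt N)$ in $\xi/\sqrt N$, using $\sum_k\xi_k^{(J)}=0$ and the identity $(u_k/M_{J,k}-u_q/M_{J,q})_{k<q}=B_J\hat u$, turns the $j=J$ expectation into the Gaussian integral over $\xi^{(J)}\sim\mathcal N_{q-1}(0,\Sigma)$ of $\exp((B_J\hat u)^T\xi-\tfrac12\xi^T B_J\xi)$ times the same prefactor $\exp(-N\ent(M_J)+\sqrt N\sum_k u_k\log M_{J,k})$.

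The decisive step is that $\Sigma=H_J^{-1}-B_J^{-1}$ is designed exactly so that this Gaussian integral regenerates the missing quadratic factor of the numerator: a direct computation gives $\Sigma(B_J+\Sigma^{-1})=H_J^{-1}B_J$ and $(B_J+\Sigma^{-1})^{-1}=B_J^{-1}-B_J^{-1}H_J B_J^{-1}$, so the integral equals $\sqrt{\det H_J/\det B_J}\exp(\tfrac12\hat u^T B_J\hat u-\tfrac12\hat u^T H_J\hat u)$, and $w_J\sqrt{\det H_J/\det B_J}=1$ by the definition of $w_J$ and $\det B_J=(\prod_k M_{J,k})^{-1}$. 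Substituting back, denominator and numerator exponents match term for term, and the ratio is $1+o(1)$. The main obstacle is uniform control of the Taylor remainders: the cubic errors $O(\|u\|^3/\sqrt N)$ from expanding $G$ and $\ent$, and $O(\|\xi\|^3/\sqrt N)$ from expanding the logarithm inside the $\xi$-expectation, must both be $o(1)$, which dictates the restriction $\delta<1/6$. A secondary (routine) issue is that replacing the truncated Gaussian by the full Gaussian in the integral costs only super-polynomially small error, since $B_J+\Sigma^{-1}$ is positive definite and the integrand times the Gaussian density decays like $\exp(-c\|\xi\|^2)$ past the truncation boundary.
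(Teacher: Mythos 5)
Your proposal is correct and follows essentially the same route as the paper: cancel the common multinomial weight, invoke Lemma \ref{lem:partitionfunction}, discard the $j\neq J$ summands by a Gibbs/relative-entropy comparison, and Taylor-expand to quadratic order so that the Gaussian integral with covariance $\Sigma=H_J^{-1}-B_J^{-1}$ reproduces the numerator exactly (the paper expands with $\hat m$-dependent coefficients and only at the end replaces $A^{-1}$ by $\diag M_J-M_JM_J^T$, whereas you expand around $M_J$ from the start, which makes the matrix identities exact rather than up to $\mathcal O(N^{-1/2+\delta})$). One harmless slip: for $j\neq J$ the linear term $\sqrt N\sum_k m_k\xi_k/M_{j,k}$ does not cancel (only for $j=J$ does $\sum_k\xi_k=0$ nearly kill it), so the $\xi$-perturbation of the prefactor is $\exp(\mathcal O(N^{1/2+\delta}))$ rather than $\exp(\mathcal O(N^{\delta}))$, which is still dominated by the $e^{-cN}$ deficit coming from Gibbs' inequality, so the conclusion stands.
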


\begin{proof}
For notational simplicity, we will skip the superscript of $\xi=\xi^{(j)}$ from now on. Let us fix $J=1,\dots,p$ and take $m\in\Delta$ such that
$\lVert M_J-\hat m\rVert = \mathcal{O} (N^{-1/2+\delta})$.
For convenience we will consider the inverse quotient $\frac{d\tilde\nu_N}{d\tilde \mu_N}(m)$.
It follows from an application of Lemma \ref{lem:partitionfunction}, the fact that $ M_J$ maximizes the function $G$ and by
the definition of $\ent(m)$ that
\begin{align}
    \frac {\tilde\nu_N}{\tilde \mu_N}(m)&=\frac{Z_N \sum_{j=1}^p w_j\E_\xi\big[\prod_{k=1}^q(M_{j,k}+\xi_k/\sqrt N)^{Nm_k}\big]}{|w|\exp(NF(m))}\nonumber\\
    &\sim \sum_{j=1}^p w_j\exp\big(N(\mathrm{sup}_{\hat \Delta} G-F(m))\big)\E_\xi\big[\prod_{k=1}^q(M_{j,k}+\xi_k/\sqrt N)^{Nm_k}\big]\nonumber\\
		&=\sum_{j=1}^p w_j\exp\big(N(G(M_J)-F(m)-\ent(m))\big)\E_\xi\Big[\exp\Big( N\sum_{k=1}^qm_k\log\big(\frac{M_{j,k}+\xi_k/\sqrt N}{m_k}\big)\Big)\Big]\nonumber\\
    &=\sum_{j=1}^p w_j\exp\big(N(G(M_J)-G(\hat m))\big)\E_\xi\Big[\exp\Big( N\sum_{k=1}^qm_k\log\big(\frac{M_{j,k}+\xi_k/\sqrt N}{m_k}\big)\Big)\Big]\label{eq:dichtequotientexpexp}
    \end{align}
We first argue that in the (outer) sum in the last line only the summand $j=J$ matters.
Indeed, the argument in the exponential under the expectation $\E_\xi$
can be rewritten (up to the factor $-N$) as a relative entropy between finite distributions on $\{1,\dots, q\}$:
\[
\sum_{k=1}^qm_k\log\big(\frac{M_{j,k}+\xi_k/\sqrt N}{m_k}\big)=-H(m|M_j+\xi/\sqrt N)\le 0
\]
since relative entropy is non-negative by Jensen inequality. In particular, as is well known, we have
$\sum_{k=1}^qm_k\log\big(\frac{M_{j,k}+\xi_k/\sqrt N}{m_k}\big)=0$
if and only if $\hat m=M_j+\xi/\sqrt N$. Hence for $j\neq J$ this term is bounded
from above by some strictly negative constant $-c<0$, if $N$ is so large that the $N^{-1/2+\delta}$-neighborhoods of the $M_j$
do not overlap.

    Regarding the first exponential in \eqref{eq:dichtequotientexpexp}, our assumptions on $G$ allow for a Taylor approximation of $G(\hat m)$ around $M_J$ with vanishing first order term
    \begin{align}
        N(G(M_J)-G(\hat m))&=-\frac N 2(M_J-\hat m)^T\mathrm{Hess_G}(M_J)(M_J-\hat m)+\mathcal O( N\lVert\hat m -M_J\rVert ^3)\nonumber\\
        &=\frac N 2(M_J-\hat m)^TH_J(M_J-\hat m)+o(N^{-1/2+3\delta}).\label{eq:HessG}
    \end{align}
If $\hat m$ is not close to $M_J$, which is in particular the case, if $j\neq J$,
then the first summand on the right hand side is $\mathcal{O}\left(N (N^{-1/2+\delta})^2\right)=\mathcal{O}\left(N^{2\delta}\right)$ while
the second summand on the right hand side is negligible by our choice of $\delta$.
Hence the whole summand of \eqref{eq:dichtequotientexpexp} is of negligible order $\mathcal O( \exp(N^{2\delta}-cN))$.

    Now for $j=J$, another Taylor approximation of the logarithm in the vicinity of $1$ yields
    \begin{align}
        &N\sum_{k=1}^qm_k\log\big(\frac{M_{J,k}+\xi_k/\sqrt N}{m_k}\big)\nonumber\\
        =&N\sum_{k=1}^qm_k\log\big(1+ \frac{M_{J,k}-m_k+\xi_k/\sqrt N}{m_k}\big)\nonumber\\
        =&N\sum_{k=1}^q ( M_{J,k}-m_k +\xi_k/\sqrt N)-\frac N 2 \sum_{k=1}^q \frac{( M_{J,k}-m_k +\xi_k/\sqrt N)^2}{m_k}+\mathcal O(N^{-1/2+3\delta}),\label{eq:logtaylor}
    \end{align}
    where we used the uniform cutoff $\xi_k=\mathcal O (N^\delta)$ for all $k$ by definition and $m_k^{-1}=\mathcal O (1)$ by our Assumption (A).
		
In order to simplify notation in the following, let $\mathbf 1$ be the $(q-1)$-vector consisting of all ones, $\diag(v)$ be the
$(q-1)\times (q-1)$-diagonal matrix of a $(q-1)$-vector $v$ and $I=\diag \mathbf 1$ be the
$(q-1)\times (q-1)$-identity matrix. E.g. by $\mathbf 1^T\hat m=1-m_q$,
$\mathbf 1^T\hat M_J=1-M_{J,q}$  and
$\mathbf 1^T \xi=-\xi_q$ and for the first sum in \eqref{eq:logtaylor} above we obtain
$$\sum_{k=1}^q ( M_{J,k}-m_k +\xi_k/\sqrt N)=0.$$
We split the remaining sum into the summands $k<q$ and $k=q$ to obtain
 \begin{align*}
        &N\sum_{k=1}^qm_k\log\big(\frac{M_{J,k}+\xi_k/\sqrt N}{m_k}\big)\\
        &=-\frac 1 2 \big( \sqrt N (M_J-\hat m)+\xi\big)^T \diag({\hat m})^{-1}\big( \sqrt N (M_J-\hat m)+\xi\big)\\
				&\hskip2.5cm-\frac 1 {2m_q}\big(\mathbf 1 ^T \big( \sqrt N (M_J-\hat m)+\xi\big)\big)^2+\mathcal O(N^{-1/2+3\delta})\\
       &=-\frac 1 2 (a+\xi)^TA (a+\xi)+\mathcal O(N^{-1/2+3\delta}),
        \end{align*}
where we define
$$
A=\diag(\hat m) ^{-1}+\frac1{m_q}\mathbf 1\mathbf 1^T
$$
and $a=\sqrt N (M_J-\hat m)$. Note that $A$ is positive definite for $N$ sufficiently large, although $a$ can be negative of order $N^\delta$.
Together with \eqref{eq:HessG}, we have obtained that \eqref{eq:dichtequotientexpexp} is given by
\begin{align}
    \frac {\tilde\nu_N}{\tilde \mu_N}(m)&=w_J\exp(\tfrac 1 2 a^TH_Ja+o(1)) \E_\xi\big[\exp\big(  -\tfrac 1 2 (a+\xi)^TA (a+\xi) \big)\big].\label{eq:dichtequotient_step1}
    \end{align}
Now, recall that we assume that $\xi=\tilde \xi \mathbbm 1_{B_{N^\delta}(0)}$ for a truncated Gaussian vector $\tilde\xi$ whose density
converges to the density of a non-degenerate Gaussian distribution $\mathcal N_{q-1}(0,\Sigma)$, uniformly on $B_{N^\delta}(0)$. Hence
we may now split the integration into $B_{N^\delta}(0)^c$ and $B_{N^\delta}(0)$, and perform the Gaussian integral by completing the
square
    \begin{align}\label{eq:quadr_erg}
    &\E_\xi\big[\exp\big(  -\xi^TAa-\frac 1 2 \xi^TA\xi  \big)\big]\nonumber\\
    =&\frac 1{\sqrt{\det(2\pi\Sigma)}}\int_{B_{N^\delta}(0)}\exp\big(  - x^TAa-\frac 1 2 x^T(A+\Sigma^{-1})x  \big)dx+\mathcal N_{q-1}(0,\Sigma)\big( B_{N^\delta}(0)^c\big)\\
    =&\exp\big(\tfrac 1 2 a^TA(A+\Sigma^{-1})^{-1} Aa \big)\nonumber\\
    &\frac 1{\sqrt{\det(2\pi\Sigma)}}\int_{B_{N^\delta}(0)}\exp\big(-\frac 1 2 (x+(A+\Sigma^{-1})^{-1}Aa)^T(A+\Sigma^{-1})(x+(A+\Sigma^{-1})^{-1}Aa)  \big)dx    +o(1)\nonumber\\
    =&\exp\big(\tfrac 1 2 a^TA(A+\Sigma^{-1})^{-1} Aa \big) \sqrt{\det(\Sigma A+I)^{-1}}+o(1)\nonumber,
    \end{align}
    where we used the well known Gaussian tail estimates and the full Gaussian measure of
		$\mathcal N (-(A+\Sigma^{-1})^{-1}Aa,(A+\Sigma^{-1})^{-1})$.
		Indeed, this approximation works if the integration area covers the mean by at least the standard deviation,
		that is if $\lVert (A+\Sigma^{-1})^{-1/2}Aa\rVert=o(N^\delta)$ or equivalently our assumption
		$\lVert M_J-\hat m\rVert=o (N^{-1/2+\delta})$.

It follows
\begin{align*}
   \frac {\tilde\nu_N}{\tilde \mu_N}(m)=&w_J\sqrt{\det(I+\Sigma A)^{-1}}
    \exp\Big(\frac 1 2 a^T\big(H_J-A+A(A+\Sigma^{-1})^{-1} A\big)a +o(1)\Big)\\
    &+o\big(\exp(\tfrac 1 2 a^T(H_J -A)a)\big).
    \end{align*}
    It remains to evaluate the matrices, in particular we will check that $$H_J-A+A(A+\Sigma^{-1})^{-1} A$$ has small norm from we
		also obtain that the remainder is negligible. It is not difficult to see that
		\be \label{eq:matrix1}
		-A+A(A+\Sigma^{-1})^{-1}A=-A+((A+\Sigma ^{-1})-\Sigma^{-1})(A+\Sigma^{-1})^{-1}A =-(A^{-1}+\Sigma )^{-1}
		\end{equation}
		and another short calculation (using $\mathbf 1 m_q=\mathbf 1-\mathbf 1 \mathbf 1^T \hat m$) yields
		$$A^{-1}= \diag \hat m -\hat m \hat m^T.$$
        We would like to replace $A^{-1}$ with
		$$\tilde A^{-1}=\diag M_J-M_JM_J^T \quad \text{for  }\tilde A=\diag M_J ^{-1}+\frac1{M_{J,q}}\mathbf 1\mathbf 1^T.$$
		Then the deviation in Hilbert Schmidt norm is at most
    \begin{align*}
        \rVert A^{-1}-\tilde A^{-1}\lVert_{HS}= \lVert(\diag (\hat m- M_J)  +\hat m (\hat m -M_J)^T+(\hat m-M_J)M_J^T \rVert_{HS}=\mathcal O(N^{-1/2+\delta}).
    \end{align*}
    Since again $\tilde A^{-1}=\diag M_J-M_JM_J^T$, we can write $\Sigma =H_J^{-1}-\tilde A^{-1}$ by definition \eqref{eq:Gaussian_distribution}, therefore, by using \eqref{eq:matrix1} for the first equality,
    \begin{align*}
    H_J-A+A(A+\Sigma^{-1})^{-1} A&=H_J-(H_J^{-1}-\tilde A^{-1}+A^{-1})^{-1}\\
    &=H_J-H_J\big(I+(A ^{-1}-\tilde A^{-1})H_J\big)^{-1}\\
        &=H_J(\tilde A^{-1}-A^{-1})H_J\big(I+(A ^{-1}-\tilde A^{-1})H_J\big)^{-1}=\mathcal O (N^{-1/2+\delta}).
    \end{align*}
    Similarly, observe that
    \[\sqrt{\det( I+\Sigma A)}\sim \sqrt{\det(I+\Sigma  \tilde A)}=\sqrt{\det(H_J^{-1}  \tilde A)}=\sqrt{\det (H_J^{-1})\prod_{k=1}^q M_{J,k}^{-1}}=w_J>0.\]
    Combining all our estimates, we conclude with our claim.
        \end{proof}

A closer inspection of the proof also verifies that the error of approximation is at most $\mathcal O (N^{-1/2+3\delta})$, as mentioned in Remark \ref{rem:quantification}.

\begin{proof}[Proof of Theorem \ref{theorem:main_gen}]
Note that due to the fact, that $m_N$ is a sufficient statistic for both, the Gibbs measure $\mu_N$ as well as the mixed measure $\nu_N$, it suffices to show that
\[
H(\tilde \mu_N|\tilde \nu_N)\to 0 \qquad \text{as well as}\qquad H(\tilde \nu_N|\tilde\mu_N)\to 0.\]
For the first of these assertions choose a constant $K>1$ and write
\begin{multline*}
H(\tilde \mu_N|\tilde \nu_N)\\= \sum_{j=1}^p\sum_{m: \lVert M_j-\hat m\rVert\le K N^{-1/2+\delta}}
\log\left(\frac{\tilde \mu_N(m)}{\tilde \nu_N(m)}\right)\tilde \mu_N(m)+
\sum_{ \stackrel{m:\lVert M_j-\hat m\rVert > K N^{-1/2+\delta}}{\forall j=1, \ldots p}}
\log\left(\frac{\tilde \mu_N(m)}{\tilde \nu_N(m)}\right)\tilde \mu_N(m)
\end{multline*}
where we choose $\delta$ slightly smaller than in the proof of Lemma \ref{lem:dichtequotient}.
For the first summand above we know from Lemma \ref{lem:dichtequotient} that
$$
\log\left(\frac{\tilde \mu_N(m)}{\tilde \nu_N(m)}\right) \to 0
$$
uniformly in $j$ and $m$ such hat $\lVert M_j-\hat m\rVert\le K N^{-1/2+\delta}$. Hence this summand converges to $0$.
On the other hand, for all $m$ in the second sum we employ a similar consideration as in the proof
Lemma \ref{lem:dichtequotient}. Again using Stirling's formula, we see that there is a universal constant $C_0>0$ depending only on $q$
such that
$$
 \binom{N}{Nm_1,\dots,Nm_q} \le C_0 \exp\left(N \ent(m)\right)
$$
for all $m \in \Delta$.
Therefore
\begin{align}\label{eq:mu_MDP}
\tilde \mu_N(m) &= \frac 1 {Z_N} \binom{N}{Nm_1,\dots,Nm_q} \exp\big( N F(m))\big) \nonumber \\&\le
\frac{C_0\exp\left(N(F(m)+\ent(m))\right)}{|w| \exp (N\mathrm{sup}_{\hat \Delta} G)}\nonumber\\
&\le C_1\exp\left(N(G(\hat m)-\sup_{\hat \Delta} G)\right)
\end{align}
for some other constant $C_1>0$ 
and all large enough $N$.
Now we can find $\vep >0$ and a sufficiently small neighborhood $V_j$
of each of the $M_j$ such that $G$ is concave on each of the $V_j$
$$
\sup_{\hat m \notin \bigcup_{j=1}^p V_j} G(\hat m)-\sup_{\hat \Delta} G \le -\vep.
$$
Then for $\hat m \in V_j$ for one $j=1, \ldots, p$ but $\lVert M_j-\hat m\rVert > K N^{-1/2+\delta}$ we obtain
$$
C_1\exp\left(N(G(m)-\sup_{\hat \Delta} G)\right)\le C_1 \exp\left(-N^{2\delta} + o(1)\right)\le
C_2 \exp\left(-N^{2\delta}\right)
$$
for some $C_2>0$ by concavity. On the other hand, for $m \notin \bigcup_{j=1}^p V_j$ we simply get
$$
C_1\exp\left(N(G(m)-\sup_{\hat \Delta} G)\right)\le C_1 \exp\left(-N\vep\right).
$$
Therefore, it follows $\tilde \mu_N(m)\le C_3 \exp(-N^{2\delta})$ for $N$ sufficiently large and it remains to bound the logarithmic factor $\log(\tilde\mu_N(m)/\tilde\nu_N(m))$.
To this end, note that due to the truncation of $\xi$, for $\epsilon>0$, it holds $\xi_k/\sqrt N>-\epsilon$ for $N$ sufficiently large. By assumption (A), we obtain
$$
\binom{N}{Nm_1,\dots,Nm_q}\prod_{k=1}^q \Big(M_{j,k}+\frac{\xi_k}{\sqrt N}\Big)^{Nm_k} \ge\prod_{k=1}^q\Big(M_{j,k}-\epsilon\Big)^{N}\ge e^{-C_4 N}
$$
for some $C_4>0$, uniformly in $\xi$ and $m$.
This shows that
$$
\sum_{\stackrel{m: \lVert M_j-\hat m\rVert > K N^{-1/2+\delta}}{\forall j=1, \ldots p}}
\log\left(\frac{\tilde \mu_N(m)}{\tilde \nu_N(m)}\right)\tilde \mu_N(m)
\le C_5 N^{q+1} \exp(-N^{2\delta'}) \to 0
$$
as $N \to \infty$ and therefore that $H(\tilde \mu_N|\tilde \nu_N)\to 0$.

For the second statement we write
\begin{multline*}
H(\tilde \nu_N|\tilde \mu_N)\\= \sum_{j=1}^p\sum_{m: \lVert M_J-\hat m\rVert\le K N^{-1/2+\delta}}
\log\left(\frac{\tilde \nu_N(m)}{\tilde \mu_N(m)}\right)\tilde \nu_N(m)+
\sum_{ \stackrel{m:\lVert M_j-\hat m\rVert > K N^{-1/2+\delta}}{\forall j=1, \ldots p}}
\log\left(\frac{\tilde \nu_N(m)}{\tilde \mu_N(m)}\right)\tilde \nu_N(m).
\end{multline*}
Again the first of the two sums on the right hand side converge to $0$ by Lemma \ref{lem:dichtequotient}.
For the other sum this time observe that
$$
\tilde \mu_N(m) \ge C \exp\left(N\left(\inf_{m \in \Delta} G(\hat m)-\sup_{m \in \Delta} G(\hat m)\right)\right).
$$
As $\Delta$ is compact and $F$ and $\ent$ are continuous
$$-\infty< \inf_{m \in \Delta} G(\hat m)-\sup_{m \in \Delta} G(\hat m))<0.$$
Thus
$\log\left(\frac{\tilde \nu_N(m)}{\tilde \mu_N(m)}\right) \le CN$ for some constant $C$.
On the other hand, by Stirling's formula and following
the proof of Lemma \ref{lem:dichtequotient} for all $m$ with $\lVert M_J-\hat m\rVert > K N^{-1/2+\delta}\forall j=1, \ldots p$
we have
\begin{align}
\tilde \nu_N(m)&= \frac{1}{| w|}\sum_{j=1}^p w_j\binom{N}{Nm_1,\dots,Nm_q}\E_\xi\Big[\prod_{k=1}^q
\Big(M_{j,k}+\frac{\xi_k}{\sqrt N}\Big)^{Nm_k}\Big]\nonumber\\
&=\frac{Q(N)}{| w|}\sum_{j=1}^p w_j \E_\xi\Big[\exp\Big( N\sum_{k=1}^qm_k\log\big(\frac{M_{j,k}+\xi_k/\sqrt N}{m_k}\big)\Big)\Big]\nonumber\\
&= \frac{Q(N)}{| w|}\sum_{j=1}^p w_j \E_\xi\Big[\exp\Big(-N H(m|M_j+\xi/\sqrt N)\Big)\Big] \label{eq:bound_nu_tilde}
\end{align}
for some function $Q$ bounded by a polynomial. Now, for each fixed realization of $\xi$ the function
$m \mapsto H(m|M_j+\xi/\sqrt N)$ attains it minimum value $0$ for
$m= M_j+\xi/\sqrt N$ and it grows quadratically in a neighborhood of this minimum.
By construction for realizations of $\xi$ and all $\hat m$ such that $\lVert M_j-\hat m\rVert > K N^{-1/2+\delta}$ we have that
$$
\lVert M_j+\xi/\sqrt N-\hat m\rVert > (K-1) N^{-1/2+\delta}
$$
and $K-1>0$.

Therefore we obtain for $\hat m$ such that $\lVert M_j-\hat m\rVert > K N^{-1/2+\delta}$
$$
\tilde \nu_N(m) \le \frac{Q(N)}{| w|}\sum_{j=1}^p w_j \exp\left(-N K' N^{-1+2\delta}\right)
\le \exp\left(-N^{2 \delta'}\right)
$$
for some $0<\delta'<\delta$.
Thus
$$
\sum_{m: \stackrel{\lVert M_j-\hat m\rVert > K N^{-1/2+\delta}}{\forall j=1, \ldots p}}
\log\left(\frac{\tilde \nu_N(m)}{\tilde \mu_N(m)}\right)\tilde \nu_N(m) \le C N^2 \exp\left(-N^{2 \delta'}\right)
$$
showing that also $H(\tilde \nu_N|\tilde \mu_N)\to 0$.
\end{proof}

\begin{remark}
An inspection of the above proof shows that we used the assumption that $\xi$ is a vector of truncated {\it{Gaussian}} random variables
only in the proof of Lemma \ref{lem:dichtequotient} and specifically in \eqref{eq:quadr_erg} in order to obtain explicit bounds
on the integrals outside of $B_{N^\delta}(0)$ and to be able to complete the square in the exponent. Actually, this would also be
possible, if the distribution of $\xi$ converges ''sufficiently well'' to the Gaussian distribution assumed in Theorem
\ref{theorem:main_gen} (and the $\xi$ are truncated on a level $N^\delta$ again). One class of such distributions would be
distributions with a Lebesgue-density that converges uniformly to the Gaussian density given in \eqref{eq:Gaussian_distribution}. For instance, this property is satisfied by $\xi$, if we choose a $\beta$-distribution $B(N/2,N/2)$ for the coordinates $\xi_k$.
This latter example was the one that motivated Theorem \ref{theorem:main_gen}: If we specialize to the Curie-Weiss model as in
Section \ref{sec:CW}, then, due to the ferromagnetic interaction, one could think of the number of $+$-spins there as a behaving like a
Polya urn when $N$ gets large: The larger the proportion of positive spins under the first $N$ spins is, the more likely will it be
that also spin number $N+1$ is positive. This special case has been addressed in \cite{JKLM} in terms of the total variation distance instead of relative entropy and via a local limit theorem. Let us remark, that the scaling of $\tilde\mu_N$ and of the analogous push-forward measure in \cite{JKLM} differ, however this has no effect on the total variation distance (which is scale invariant).
\end{remark}

\section{Classical propagation of chaos}
In this section we want to study the relation of Theorem
\ref{theorem:main_gen} to the usual propagation of chaos, more precisely
we want to see that in the situation where $G$ has a unique non-degenerate maximizer our central result Theorem
\ref{theorem:main_gen} implies the increasing propagation of chaos result, Theorem 1 in \cite{BAZ_chaos}.
\begin{corollary}\label{cor:usual_chaos}
Assume that $G$ has a unique non-degenerate maximizer $M=M_1$ that lies in the interior of $\hat \Delta$.
Then Theorem \ref{theorem:main_gen}
implies increasing propagation of chaos with respect to product measure $\rho^N$ on
$\{1, \ldots, q\}^N$ where
$$
\rho(j)= M_{1,l} \quad \mbox{for all } l=1, \ldots q.
$$
More precisely, there exists a constant $c>0$ such that for any $k\le N$ the $k$'th marginal $\mu_N^{(k)}$ satisfies
\begin{align*}
H(\mu_N^{(k)}\mid\rho^k)\le c \frac{k}{N}.
\end{align*}
\end{corollary}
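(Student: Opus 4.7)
The plan is to reduce the statement to a uniform bound $H(\mu_N|\rho^N) \leq c_0$ via exchangeability, and then to establish that bound directly using Lemma~\ref{lem:partitionfunction}. For the reduction, fix $k \leq N$ and set $r = \lfloor N/k \rfloor$. Partition $\{1,\ldots,N\}$ into $r$ disjoint blocks $S_1,\ldots,S_r$ of size $k$ together with a (possibly empty) remainder $S_{r+1}$. Since $\rho^N$ factorizes across any such partition, the decomposition
\[
\log\frac{d\mu_N}{d\rho^N} \;=\; \log\frac{d\mu_N}{d\bigotimes_i \mu_N^{(S_i)}} \;+\; \sum_{i=1}^{r+1}\log\frac{d\mu_N^{(S_i)}}{d\rho^{|S_i|}},
\]
integrated against $\mu_N$, gives the identity
\[
H(\mu_N|\rho^N) \;=\; H\bigl(\mu_N \mid {\textstyle\bigotimes_i}\mu_N^{(S_i)}\bigr) \;+\; \sum_{i=1}^{r+1} H(\mu_N^{(S_i)}|\rho^{|S_i|}).
\]
Dropping the nonnegative mutual-information term and the remainder block, and invoking exchangeability to identify $H(\mu_N^{(S_i)}|\rho^k) = H(\mu_N^{(k)}|\rho^k)$ for each full block, I get $H(\mu_N|\rho^N) \geq r \cdot H(\mu_N^{(k)}|\rho^k)$. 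For $k \leq N/2$ this yields $H(\mu_N^{(k)}|\rho^k) \leq (2k/N)\,H(\mu_N|\rho^N)$, and for $k > N/2$ the same inequality follows trivially from data-processing.

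To bound $H(\mu_N|\rho^N)$ uniformly, I compute directly. Using $\rho^N(\sigma) = \prod_k M_k^{Nm_{N,k}(\sigma)}$ together with the identity $\sum_k m_k \log M_k = -\ent(m) - H(m|M)$, I obtain
\[
\log\frac{\mu_N(\sigma)}{\rho^N(\sigma)} \;=\; -\log Z_N + N G(m_N(\sigma)) + N H(m_N(\sigma)|M).
\]
Integrating against $\mu_N$ and replacing $\log Z_N$ by $N G(M) + \log w_1 + o(1)$ via Lemma~\ref{lem:partitionfunction} (noting that $p=1$ forces $|w|=w_1$) collapses the right-hand side to
\[
H(\mu_N|\rho^N) \;=\; N\,\E_{\mu_N}[G(m_N) - G(M)] \;+\; N\,\E_{\mu_N}[H(m_N|M)] \;-\; \log w_1 \;+\; o(1).
\]
A Taylor expansion of $G$ and of $H(\cdot|M)$ around $M$, with vanishing first-order terms (by maximality and by the minimum of $H(\cdot|M)$ at $M$ respectively), shows that -- up to negligible cubic remainders -- both $N(G(M)-G(m_N))$ and $NH(m_N|M)$ are positive-definite quadratic forms in $\sqrt{N}(\hat m_N - M)$. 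The non-degeneracy of $H_1$ together with the exponential large-deviation bound on $\mu_N$ outside a neighborhood of $M$ (already established in \eqref{eq:mu_MDP} and in the proof of Lemma~\ref{lem:dichtequotient}) then guarantees that $\sqrt{N}(\hat m_N - M)$ has uniformly bounded second moments under $\mu_N$. Consequently both expectations are $O(1)$, and $H(\mu_N|\rho^N) \leq c_0$ for some $N$-independent $c_0$.

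Combining the two steps produces $H(\mu_N^{(k)}|\rho^k) \leq 2 c_0 k/N$, which is the claim with $c := 2 c_0$. The only real obstacle is the uniform second-moment estimate for $\sqrt{N}(\hat m_N - M)$ in the second step: one splits the expectation into the Laplace window around $M$, where the Stirling refinement from Lemma~\ref{lem:partitionfunction} provides the Gaussian density, and its complement, where the Gibbs weight decays super-polynomially by \eqref{eq:mu_MDP}. Both ingredients are already implicit in the proof of Theorem~\ref{theorem:main_gen}, so no essentially new analytic input is required.
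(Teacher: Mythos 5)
Your proof is correct, but it follows a genuinely different route from the paper. The paper's proof cites Csisz\'ar's inequality to get $H(\mu_N^{(k)}|\rho^k)\le \tfrac kN H(\mu_N|\rho^N)$ exactly, then splits $H(\mu_N|\rho^N)=H(\mu_N|\nu_N)+\int\log\tfrac{d\nu_N}{d\rho^N}\,d\mu_N$, kills the first term with Theorem \ref{theorem:main_gen}, and bounds the second by redoing the Gaussian computation of Lemma \ref{lem:dichtequotient} on the window $\lVert \hat m-M\rVert\le cN^{-1/2+\delta}$ and the crude bounds \eqref{eq:mu_MDP} outside. You instead (i) re-derive a Csisz\'ar-type superadditivity bound from scratch via the block decomposition $H(\mu_N|\rho^N)=H(\mu_N|\bigotimes_i\mu_N^{(S_i)})+\sum_i H(\mu_N^{(S_i)}|\rho^{|S_i|})$ plus exchangeability (losing only a factor $2$, handled separately for $k>N/2$ by data processing), and (ii) bound $H(\mu_N|\rho^N)$ directly from the Gibbs form via the identity $\log\tfrac{\mu_N}{\rho^N}=-\log Z_N+NG(\hat m_N)+NH(m_N|M)$, Lemma \ref{lem:partitionfunction}, and a second-moment bound on $\sqrt N(\hat m_N-M)$ under $\mu_N$ --- thereby bypassing $\nu_N$ and Theorem \ref{theorem:main_gen} altogether. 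Your route is more self-contained and arguably cleaner (no appeal to the external Csisz\'ar reference, no mixture measure), though it does not literally exhibit the corollary as a consequence of Theorem \ref{theorem:main_gen}, which is the framing the paper advertises; the paper's route is shorter given the machinery already built. One point to make explicit in step (ii): inside the Laplace window the crude bound \eqref{eq:mu_MDP} alone is off by a factor $N^{(q-1)/2}$ for the second-moment estimate, so you genuinely need the refined Stirling bound with the $(2\pi N)^{-(q-1)/2}(\prod_k m_k)^{-1/2}$ prefactor from the proof of Lemma \ref{lem:partitionfunction} (valid uniformly on a fixed neighborhood of $M$, since $M$ lies in the interior) --- you gesture at exactly this, so the argument closes, but it deserves a line rather than being left implicit.
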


\begin{proof}
Using Csiszar's inequality \cite[Equation (2.10)]{csiszar} we obtain
\begin{equation}\label{eq:entropie_ungl}
H\left(\mu_{N}^{(k)}\big| \rho^{k}\right) \le \frac{k}{N} H\left(\mu_{N}\big| \rho^N \right).
    \end{equation}
Hence it remains to bound
\begin{align}\label{eq:split}
H(\mu_N\mid \rho^N)=H(\mu_N |\nu_N)+\int \log\frac {d\nu_N}{d\rho^N}d\mu_N,
\end{align}
where the first term on the right hand side vanishes according to Theorem \ref{theorem:main_gen}.
Similar to Lemma \ref{lem:dichtequotient}, $m_N$ is an order parameter for $\rho^N$
with
$$
\tilde \rho^N(m):=\rho^N(m_N=m)=\binom{N}{Nm_1,\dots,Nm_q} \prod_{l=1}^q M_{1,l}^{N m_l}.
$$
Using that in our situation $p=1$ and therefore $w_1=|w|=1$ this immediately brings us to
\begin{align*}
\frac {d\tilde\nu_N}{d\tilde\rho^N}(m)&=\frac{\frac{1}{| w|}\sum_{j=1}^p w_j\binom{N}{Nm_1,\dots,Nm_q}\E_\xi\Big[\prod_{l=1}^q
\Big(M_{j,l}+\frac{\xi_l}{\sqrt N}\Big)^{Nm_l}\Big]}{\binom{N}{Nm_1,\dots,Nm_q} \prod_{l=1}^q M_{1,l}^{N m_l}}
\\
&= \E_\xi\Big[\exp\Big( N\sum_{l=1}^q m_l\log\Big(\frac{M_{1,l}+\frac{\xi_l}{\sqrt N}}{m_l}\Big)
-N\sum_{l=1}^q m_l \log  \frac{M_{1,l}}{m_l}\Big)\Big]\\
&= \E_\xi\Big[\exp\Big( N\sum_{l=1}^q m_l\log\Big(1+\frac{M_{1,l}-m_l+\frac{\xi_l}{\sqrt N}}{m_l}\Big)
-N\sum_{l=1}^q m_l \log \Big(1+\frac{M_{1,l}-m_l}{m_l}\Big)\Big)\Big]\\
&= \E_\xi\Big[\exp\Big( N\big\{\sum_{l=1}^q \big( M_{1,l}-m_l+\frac{\xi_l}{\sqrt N}\big)-\big(M_{1lk}-m_l\big) \\
&\qquad -\frac 12 \frac{\big(M_{1,l}-m_l+\frac{\xi_l}{\sqrt N}\big)^2}{m_l}+\frac 12 \frac{\big(M_{1,l}-m_l\big)^2}{m_l}+
\mathcal{O}(N^{3 \delta-1/2})\Big\}\Big)\Big]
\end{align*}
where we again used Stirling's formula for the multinomial coefficient, a Taylor expansion of
$\log$ together with the truncation of the $\xi$-variables at a
height of $N^{\delta}$ with $0<\delta<1/6$.
Now the linear term $N\sum_{l=1}^q \big( M_{1,l}-m_l+\frac{\xi_l}{\sqrt N}\big)-\big(M_{1,l}-m_l\big) $ vanishes,
since $\sum_{l=1}^q \xi_l=0$. As for the remaining term
$$
\E_\xi\Big[\exp\Big(-\frac N2 \frac{\big(M_{1,l}-m_l+\frac{\xi_l}{\sqrt N}\big)^2}{m_l}+\frac N2 \frac{\big(M_{1,l}-m_l\big)^2}{m_l}
\Big)\Big]
$$
one shows that this quantity is bounded uniformly in $N$ and for all $m$ such that $\lVert M_J-\hat m\rVert=\mathcal O (N^{-1/2+\delta})$
as in the proof of Lemma \ref{lem:dichtequotient}. More precisely, with notation introduced there
\begin{align*}
&\E_\xi\Big[\exp\Big(-\frac N2 \frac{\big(M_{1,l}-m_l+\frac{\xi_l}{\sqrt N}\big)^2}{m_l}+\frac N2 \frac{\big(M_{1,l}-m_l\big)^2}{m_l}
\Big)\Big]\\
&= \E_\xi\big[\exp\big(  -\xi^TAa-\frac 1 2 \xi^TA\xi  \big)\big]\\
&= \exp\big(\tfrac 1 2 a^TA(A+\Sigma^{-1})^{-1} Aa \big) \sqrt{\det(\Sigma A+I)^{-1}}+o(1) \le \exp(C)
\end{align*}
for some appropriate $C$.

Therefore, taking some arbitrary constant $c>0$,
splitting the range of summation,  and using \eqref{eq:mu_MDP} as well as the fact that for all $m$
$$
\varrho^N(m_N=m)\ge M_{\min}^N \qquad \mbox{where } M_{\min}:=\min_{k=1, \ldots q} M_{1,k}
$$
and therefore $\log\left(\frac{\nu_N}{\rho_N}\right)(m) \le N \log (1/M_{\min})$ for all $m \in \Delta \cap \frac 1N \Z$
we arrive at
\begin{align*}
\int \log\frac {d\nu_N}{d\rho^N}d\mu_N& = \int_{m:\lVert M_J-\hat m\rVert \le c (N^{-1/2+\delta})}
\log\frac {d\nu_N}{d\rho^N}d\mu_N + \int_{m:\lVert M_J-\hat m\rVert > c (N^{-1/2+\delta})}
\log\frac {d\nu_N}{d\rho^N}d\mu_N \\
& \le C + \exp(-N^{2 \delta'}) N \log (1/M_{\min}) \\
& \le C+1
\end{align*}
for $N$ large enough.
In view of \eqref{eq:split} the assertion follows.
\end{proof}

\begin{remark}
Note in \cite{BAZ_chaos} the authors also show increasing propagation of chaos for not necessarily unique, non-degenerate maximizer $M$. Their proof contains a detour from relative entropy to total variation distance and back, which we will omit for simplicity.
\end{remark}

Corollary \ref{cor:usual_chaos} tells us that $k= o(N)$ many spins under the Gibbs measure $\mu_N$ are asymptotically independent, i.e.~
we have (increasing) propagation of chaos. Complementary, the approximation by the mixture $\nu_N$ of Theorem \ref{theorem:main_gen}
shows even more: If $k= \alpha N$ for some $0<\alpha \le 1$,
then the chaos stops to propagate, hence Corollary \ref{cor:usual_chaos} is optimal. The convex
combination of product measures we wish to compare to is given by
$$\rho^{(k)}(\sigma):= \frac{1}{| w|}\sum_{j=1}^p w_j\prod_{i=1}^k\sum_{l=1}^q M_{j,l}\delta_{l}(\sigma_i).$$
Note that if $p=1$, then $\rho^{(k)}=\rho^k$ is a product measure as in Corollary \ref{cor:usual_chaos}.

\begin{corollary}\label{cor:chaos_stops}
 If $\lim_N \frac{k}{N}=\alpha\in(0,1]$, then
$$\liminf_{N\to\infty} H(\mu_N^{(k)}\mid\rho^{(k)})>0.$$
\end{corollary}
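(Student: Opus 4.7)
The plan is to exploit the discrepancy between the macroscopic fluctuations of the empirical measure $m_k$ under $\mu_N^{(k)}$ versus $\rho^{(k)}$ when $k\sim\alpha N$. Intuitively, under the convex mixture of product measures $\rho^{(k)}$, $m_k$ fluctuates at scale $k^{-1/2}$ with the multinomial covariance $D_j:=\diag(M_j)-M_jM_j^T$ in cluster $j$, whereas under $\mu_N^{(k)}$ the residual Gaussian mixing $\xi^{(j)}$ inflates this covariance by the macroscopic amount $\alpha\Sigma^{(j)}$. Concretely, let $m_k:\{1,\dots,q\}^k\to\Delta$ be the empirical measure on $k$ spins; by the data processing inequality
$H(\mu_N^{(k)}\mid\rho^{(k)})\ge H(m_{k,\#}\mu_N^{(k)}\mid m_{k,\#}\rho^{(k)}),$
so it suffices to lower-bound the pushforward relative entropy.

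Fix small pairwise disjoint neighborhoods $V_j\subset\Delta$ of the maximizers $M_j$, $j=1,\dots,p$. Under $\rho^{(k)}=|w|^{-1}\sum_j w_j M_j^{\otimes k}$, conditional on being in the $j$-th mixture component (equivalently, on $\{m_k\in V_j\}$ up to $o(1)$) the $k$ spins are i.i.d.\ Multinomial$(1,M_j)$, and the classical multinomial CLT gives $\sqrt{k}(m_k-M_j)\overset{d}{\to}\mathcal{N}(0,D_j)$. Under $\nu_N^{(k)}$ the $j$-th component is, conditionally on $\xi^{(j)}$, an i.i.d.\ product of Multinomial$(1,M_j+\xi^{(j)}/\sqrt{N})$; combining the conditional CLT (which contributes a $D_j/k$ covariance), the fact that $\xi^{(j)}/\sqrt{N}$ is asymptotically $\mathcal{N}(0,\Sigma^{(j)}/N)$, and the scaling $\sqrt{k/N}\to\sqrt{\alpha}$ yields $\sqrt{k}(m_k-M_j)\overset{d}{\to}\mathcal{N}\big(0,D_j+\alpha\Sigma^{(j)}\big)$. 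By Theorem~\ref{theorem:main_gen} and Pinsker's inequality, $d_{TV}(\mu_N,\nu_N)\to 0$, which contracts under marginalization to $d_{TV}(\mu_N^{(k)},\nu_N^{(k)})\to 0$; hence the same conditional Gaussian limit holds under $\mu_N^{(k)}$, and both measures assign mass $w_j/|w|+o(1)$ to $\{m_k\in V_j\}$.

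Since $\Sigma^{(j)}$ is positive definite by assumption (B),
$c_j:=H\!\big(\mathcal{N}(0,D_j+\alpha\Sigma^{(j)})\,\big|\,\mathcal{N}(0,D_j)\big)>0.$
The chain rule for relative entropy applied to the partition of $\Delta$ by $\{V_j\}_{j=1}^p$ and its complement (which carries vanishing mass under both measures) yields
$H(m_{k,\#}\mu_N^{(k)}\mid m_{k,\#}\rho^{(k)})\ge \sum_{j=1}^p \mu_N^{(k)}(m_k\in V_j)\,H\!\left(\mu_N^{(k)}(m_k\in\cdot\mid V_j)\,\Big|\,\rho^{(k)}(m_k\in\cdot\mid V_j)\right).$
Since relative entropy is invariant under the bijective rescaling $x\mapsto\sqrt{k}(x-M_j)$ and is lower semicontinuous under joint weak convergence on $\mathbb{R}^{q-1}$, each inner term has $\liminf\ge c_j$. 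Combining,
$\liminf_{N\to\infty} H(\mu_N^{(k)}\mid\rho^{(k)})\ge \sum_{j=1}^p \frac{w_j}{|w|}\,c_j>0.$
The technical crux is transferring the explicit conditional CLT from $\nu_N^{(k)}$ to $\mu_N^{(k)}$ uniformly in $j$, for which the TV-approximation from Theorem~\ref{theorem:main_gen} is essential; all remaining steps are standard Gaussian/KL computations.
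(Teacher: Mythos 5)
Your argument is correct in substance, but it takes a genuinely different route from the paper. The paper never works with relative entropy on the right-hand side: it lower-bounds $\sqrt{2H(\mu_N^{(k)}\mid\rho^{(k)})}$ by the total variation distance via Pinsker, uses the triangle inequality with the marginal $\nu_N^{(k)}$ of the approximating mixture (whose TV distance to $\mu_N^{(k)}$ vanishes by Theorem \ref{theorem:main_gen} and Pinsker), and then shows $\liminf_N \lVert \widetilde{\rho^{(k)}}-\tilde\nu_k\rVert_{TV}>0$ by comparing the two CLT limits, citing the explicit computation of the limiting TV distance in \cite[Proposition 5.2]{JKLM} and using implicitly that the $M_j$ are distinct. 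You instead stay entirely on the entropy side: data processing to the pushforwards of $m_k$, a chain-rule decomposition over disjoint neighborhoods $V_j$ of the maximizers, transfer of the conditional CLT from $\nu_N^{(k)}$ to $\mu_N^{(k)}$ by the same TV-closeness, and joint lower semicontinuity of relative entropy under weak convergence, yielding the explicit bound $\liminf_N H(\mu_N^{(k)}\mid\rho^{(k)})\ge \sum_j \tfrac{w_j}{|w|}H\big(\mathcal N(0,D_j+\alpha\Sigma^{(j)})\mid\mathcal N(0,D_j)\big)$. This buys a quantitative lower bound and dispenses with the external reference and with the implicit $M_j\neq M_{j'}$ argument (your partition handles the separation of mixture components cleanly); the paper's route is shorter. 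Two minor points: the strict positivity of $\Sigma^{(j)}$ is not a consequence of assumption (B) alone but is the non-degeneracy built into \eqref{eq:Gaussian_distribution} (the same implicit assumption the paper invokes when saying ``the covariances strictly differ''); and your CLT normalization ($\sqrt k$ scaling, covariance $D_j+\alpha\Sigma^{(j)}$) is in fact the careful version of the limits displayed in the paper, where the factor involving $\alpha$ is suppressed --- harmless in both arguments, since only the discrepancy of the covariances is used.
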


\begin{proof}
We will start with the total variation distance of $\mu_N^{(k)}$ and $\nu_N^{(k)}$. Obviously, their total variation distance
is bounded from above by the total variation distance of the full distributions, which in turn
can be bounded by Pinsker's inequality:
\begin{align*}
\lVert \mu_N^{(k)}-\nu_N^{(k)}\rVert_{TV}\le\lVert \mu_N-\nu_N\rVert_{TV}\le \sqrt{2H(\mu_N\mid\nu_N)}.
\end{align*}
The right hand side is $o(1)$ by Theorem \ref{theorem:main_gen}.
Thus, by Pinsker's inequality and the triangle inequality we have
\begin{align*}
\sqrt{2H(\mu_N^{(k)}\mid\rho^{(k)})}\ge \lVert \rho^{(k)}-\nu_N^{(k)}\rVert_{TV}-\lVert \mu_N^{(k)}-\nu_N^{(k)}\rVert_{TV}.
\end{align*}
Since the second term on the right hand side is asymptotically negligible and the first term can only decrease under the push-forward of $m_k$, which yields $\tilde\nu_k$ for the marginal $\nu_N^{(k)}$, we obtain
\begin{align*}
\sqrt{2H(\mu_N^{(k)}\mid\rho^{(k)})}\gtrsim \lVert \rho^{(k)}-\nu_N^{(k)}\rVert_{TV}\ge  \lVert \widetilde{\rho^{(k)}}-\tilde\nu_k\rVert_{TV},
\end{align*}
where $\widetilde{\rho^{(k)}}:=\rho^{(k)}\circ m_k^{-1}$ is a convex combination of rescaled multinomial distributions
\begin{align*}
\widetilde{\rho^{(k)}}(m)= \frac{1}{| w|}\sum_{j=1}^p w_j\binom{N}{Nm_1,\dots,Nm_q}\prod_{l=1}^q M_{j,l}^{Nm_l}.
\end{align*}
By the multivariate CLT, we have
\begin{align*}
\widetilde{\rho^{(k)}}(\sqrt N(\cdot-M_j))\Rightarrow \frac{1}{| w|}\sum_{j=1}^p w_j\mathcal N(0,\diag(M_j)-M_jM_j^T),\\
\tilde \nu_k(\sqrt N(\cdot-M_j))\Rightarrow \frac{1}{| w|}\sum_{j=1}^p w_j\mathcal N(0,\diag(M_j)-M_jM_j^T+\E(\xi\xi^T)).
\end{align*}
By \eqref{eq:Gaussian_distribution}, the covariances strictly differ, hence $\liminf_{N\to\infty}\lVert \widetilde{\rho^{(k)}}-\tilde\nu_k\rVert_{TV}>0$, which implies the claim. Note that we used $M_j\neq M_k$ implicitly, see \cite[Proposition 5.2]{JKLM} wherein an explicit calculation of the limiting total-variation distance can be found.
\end{proof}

\section{Example and extensions: the Curie-Weiss Model}\label{sec:CW}
As mentioned above the Curie-Weiss model falls into our framework above when choosing $q=2$, $\hat m\in[0,1]$ and
$$G(\hat m)=\tfrac \beta 2 (1-2\hat m)^2+h(1-2\hat m)-\hat m\log(\hat m)-(1-\hat m)\log(1-\hat m).$$
For convenience and transparency, let us put down our results in the traditional setup.
Here one chooses the set $\{-1,+1\}$ rather than the set $\{1,\dots,q\}$.
Also, in this scenario, it is reasonable to describe the magnetization not as a vector
$m=(\hat m ,m_2)$ (where $N \hat m$ counts the number of $-1$-spins), but as $ m=1-2\hat m\in[-1,1]$.
The Curie-Weiss model is then retrieved by choosing
$$
F( m)=\frac{\beta}{2} m^2+h m,\qquad  h\ge 0.
$$
For any $h$ (which encodes an external magnetic field)
the corresponding measures are parame\-trized by a positive parameter $\beta>0$ known
as the inverse temperature. Given such $\beta>0$, and an external magnetic field of strength $h>0$,
the Gibbs measure, which is a measure on $\{-1,+1\}^N$ takes the following form
\be
\mu_N(\sigma):= \frac{\exp\left(\frac{\beta}{2N} \sum_{i,j=1}^N \sigma_i \sigma_j+h \sum_{i=1}^N \sigma_i\right)}{Z_N} \qquad \sigma:= (\sigma_i)_{i=1}^N\in \{-1,+1\}^N.
\ee
Again
\be
Z_N=Z_N(\beta)=\sum_{\sigma' \in \{-1,+1\}^N} \exp\left(\frac{\beta}{2N} \sum_{i,j=1}^N \sigma_i' \sigma_j'+h \sum_{i=1}^N \sigma'_i\right)
\ee
is the partition function of the model.

There is a vast literature on the Curie-Weiss model,
from which we know that the model for $h=0$ exhibits a phase transition at $\beta=1$.
This can be seen in different ways: On the one hand, the distribution of the magnetization under the Gibbs measure which can also be written as
$$
\tilde m_N:=\tilde m_N(\sigma):= 
\frac 1N \sum_{i=1}^N \sigma_i,
$$
converges:
\begin{align}\label{eq:mconvergence}
\tilde\mu_N:=\mu_N\circ \tilde m_N^{-1} \Rightarrow \frac 12 \left(\delta_{m^+(\beta)}+ \delta_{m^-(\beta)}\right) \qquad \mbox{if }h=0.
\end{align}
and
\begin{align}\label{eq:mconvergence_h}
\tilde\mu_N=\mu_N\circ \tilde m_N^{-1} \Rightarrow \frac 12 \left(\delta_{m^+(\beta,h)}+ \delta_{m^-(\beta,h)}\right) \qquad \mbox{if }h\neq 0.
\end{align}
Here, $\Rightarrow$ denotes weak convergence, $\delta$ is the Dirac measure and $m^+:=m^+(\beta,h)$ is the largest solution of the equation
\begin{align}\label{eq:tanh}
z=\tanh(\beta z+h).
\end{align}
This, and other results on the Curie-Weiss model can be found e.g.\
in the textbooks, \cite{BovierSMoDS},
\cite{EllisEntropyLargeDeviationsAndStatisticalMechanics}, \cite{Velenik_book}.

Let us first specify to the situation with zero external field, where $h=0$.
Then $m^+(\beta):= m^+(\beta,0)=-m^-(\beta)$.
For $\beta\le 1$, the high temperature regime, $m^+(\beta)$ and
$m^{-}(\beta)$, agree, while for the low temperature regime $\beta>1$, $m^+(\beta)\neq m^-(\beta)$.

As mentioned at $\beta<1$ and $h=0$, we have a unique solution of \eqref{eq:tanh} at $z=0$,
which in the notation of Section \ref{sec:General} corresponds to $p=1$ with $M_1=1/2$.
Moreover we have $H_1=-G''(1/2)=4(1-\beta)$,
thus the distribution of $\xi_1$ before truncation is $\mathcal N (0,\frac {\beta} {4(1-\beta)})$ and still $\xi_2=-\xi_1$.
Let $\tilde \xi \sim\mathcal N (0,\frac {\beta} {(1-\beta)})$ and define its truncation by $\xi=\tilde \xi \mathbbm 1 _{[-N^{\delta},+N^{\delta}]}$ for some $0<\delta<\frac 1 6$ (which equals $2\xi_2$ in the notation of the previous sections). Then \eqref{eq:nu} simplifies to
\begin{align*}
\nu_N(\sigma)&:= \E_\xi\Big[\prod_{i=1}^N \Big( \frac 1 2+\frac{\xi_1}{\sqrt N}\Big)\mathbbm 1_{-1}(\sigma_i)+ \Big(\frac 1 2+\frac{\xi_2}{\sqrt N}\Big)\mathbbm 1_{+1}(\sigma_i)\Big]\\
&=2^{-N}\E_\xi\Big[\prod_{i=1}^N \big( 1+\sigma_i\frac{\xi}{\sqrt N}\big)\Big]
\end{align*}
and we obtain as a corollary of Theorem \ref{theorem:main_gen}
\begin{corollary}\label{cor:main1a}
Assume that $h=0,\beta<1$ and define $\xi_1$ and let $(\xi_N)$ be its truncation as in Theorem \ref{theorem:main_gen}.
Moreover, also define $\nu_N$ as above. Then
\begin{align*} 
H(\mu_N|\nu_N)\to 0 \qquad \mbox{as well as} \quad H(\nu_N|\mu_N)\to 0.
\end{align*}
\end{corollary}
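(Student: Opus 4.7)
The plan is to deduce Corollary \ref{cor:main1a} as a direct specialization of Theorem \ref{theorem:main_gen}. The work is entirely a matter of verifying that the Curie--Weiss model at zero field and high temperature satisfies assumptions (A) and (B) and then checking that the general mixture \eqref{eq:nu}, after an appropriate change of variables, reduces to the $\nu_N$ displayed in the corollary. No additional probabilistic input is needed beyond what Theorem \ref{theorem:main_gen} already provides.

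First I would pass from the $\pm 1$-valued formulation to the $\{1,2\}$-valued framework by identifying state $1$ with $-1$ and state $2$ with $+1$, so that $\hat m := m_1 \in [0,1]$ is the fraction of $-1$-spins. In these coordinates the free energy reads
$$G(\hat m) = \tfrac{\beta}{2}(1-2\hat m)^2 - \hat m\log\hat m - (1-\hat m)\log(1-\hat m).$$
Differentiating gives $G'(1/2)=0$ and $G''(1/2) = 4(\beta-1) < 0$ for $\beta < 1$, so $\hat m = 1/2$ is an interior non-degenerate critical point. Strict concavity of $G$ on $(0,1)$ in this regime, equivalently the fact that $z=\tanh(\beta z)$ has only $z=0$ as a root for $\beta\le 1$, forces uniqueness. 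Thus (A) and (B) hold with $p=1$, $M_1 = 1/2$, and $H_1 = 4(1-\beta)$. The weight supplied by Lemma \ref{lem:partitionfunction} is $w_1 = |w| = (1-\beta)^{-1/2}$, and the scalar covariance \eqref{eq:Gaussian_distribution} collapses to
$$\Sigma = H_1^{-1} - M_{1,1}(1-M_{1,1}) = \frac{1}{4(1-\beta)} - \frac{1}{4} = \frac{\beta}{4(1-\beta)}.$$

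Next I would substitute these values into \eqref{eq:nu}. Since $p=1$, the overall prefactor $w_1/|w|$ is $1$. With $M_{1,1}=M_{1,2}=1/2$ and $\xi_2^{(1)}=-\xi_1^{(1)}$, each site-factor in the product becomes $\tfrac12 + \xi_1^{(1)}/\sqrt N$ when $\sigma_i=-1$ and $\tfrac12 - \xi_1^{(1)}/\sqrt N$ when $\sigma_i=+1$, uniformly written as $\tfrac12\bigl(1 - 2\sigma_i\xi_1^{(1)}/\sqrt N\bigr)$. Introducing the rescaled variable $\xi := -2\xi_1^{(1)}$, whose untruncated version is $\mathcal N(0,\beta/(1-\beta))$, pulls out an overall factor $2^{-N}$ and reproduces exactly
$$\nu_N(\sigma) = 2^{-N}\E_\xi\Big[\prod_{i=1}^N\bigl(1+\sigma_i\xi/\sqrt N\bigr)\Big]$$
as in the statement. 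The two conclusions $H(\mu_N|\nu_N)\to 0$ and $H(\nu_N|\mu_N)\to 0$ are then immediate from Theorem \ref{theorem:main_gen}.

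The only point that requires a moment of care is the truncation convention: the general theorem truncates $\tilde\xi^{(1)}$ at Euclidean norm $N^\delta$, whereas the corollary truncates the linear image $\tilde\xi = -2\tilde\xi^{(1)}_1$ at height $N^\delta$, producing a factor-of-two discrepancy in the cutoff. This is absorbed either by slightly shrinking the admissible range $\delta\in(0,\tfrac16)$, or by inspecting Lemma \ref{lem:dichtequotient} and noting that its proof is insensitive to universal constants in front of $N^\delta$. I do not anticipate any substantive obstacle beyond this bookkeeping, since the corollary really is just the scalar, single-maximizer incarnation of the general theorem.
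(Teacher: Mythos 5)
Your proposal is correct and follows essentially the same route as the paper: identify $p=1$, $M_1=1/2$, $H_1=4(1-\beta)$, hence $\Sigma=\beta/(4(1-\beta))$ via \eqref{eq:Gaussian_distribution}, rewrite \eqref{eq:nu} with $\xi=-2\xi_1^{(1)}\sim\mathcal N(0,\beta/(1-\beta))$ (before truncation), and invoke Theorem \ref{theorem:main_gen}. Your extra remarks (the concavity argument for uniqueness of the maximizer and the harmless factor-of-two in the truncation level) are accurate bookkeeping that the paper leaves implicit.
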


The analogue of this statement for total variation distance and $\xi$ being Beta-distributed is given by \cite[Theorem 3.5]{JKLM}.

As is well known for $h=0$ and $\beta >1$ we have three solutions of \eqref{eq:tanh}. The one with $z=0$ corresponds to a minimum of the free energy, while at $z=m^+$ and $z=m^-
=-m^+$ there are maxima of the free energy.

Again using the notation of Section \ref{sec:General} this means that in this case $p=2$ with
$M_1=(\frac{1+m^+}2,\frac{1-m^+}2)$
and $M_2=(\frac{1-m^+}2,\frac{1+m^+}2)$. In this case one computes that
\begin{align}\label{eq:CW_LT_H}
H_2=H_1=-G''\left(\frac{1+m^+}2\right)=4\frac{1-\beta(1-(m^+)^2)}{1-(m^+)^2},
\end{align}
such that $\xi_1$ before truncation is $\mathcal N (0,r^2)$-distributed, where
\begin{align}\label{eq:CW_LT_var}
r^2=\frac 1 4\left(\frac{1-(m^+)^2}{1-\beta(1-(m^+)^2)}+(m^+)^2-1\right)=\frac{\beta\big(1-(m^+)^2\big)^2}{4-4\beta\big(1-(m^+)^2\big)}.
\end{align}
Again $\xi_2=-\xi_1$.
Then, as a corollary of Theorem \ref{theorem:main_gen} we obtain
\begin{corollary}\label{cor:main1c}
If $h=0,\beta>1$, define $\xi_1$ as above and let $(\xi_N)$ be its truncation as in Theorem \ref{theorem:main_gen}.
Moreover, for all $N$ let
\begin{equation} \label{eq:nuLT}
\nu_N(\sigma):= \frac 12\E_{\xi_N}  \left[\prod_{i=1}^N  \left(\frac{1+\sigma_i m^+}2+\sigma_i\frac{\xi_N}{\sqrt N}\right)+ \prod_{i=1}^N  \left(\frac{1-\sigma_i m^+}2+\sigma_i\frac{\xi_N}{\sqrt N}\right)\right]
\end{equation}

Then
\begin{equation} \label{eq:convLT2}
H(\mu_N|\nu_N)\to 0 \qquad \mbox{as well as}\quad H(\nu_N|\mu_N)\to 0.
\end{equation}
\end{corollary}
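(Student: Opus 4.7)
The plan is to deduce Corollary \ref{cor:main1c} directly from Theorem \ref{theorem:main_gen} by translating the general $\{1,\dots,q\}$-framework of Section \ref{sec:General} to the $\{-1,+1\}$-language used here (with $q=2$ and $\hat m\in[0,1]$ the proportion of $-1$-spins), and then unfolding the generic mixture \eqref{eq:nu} to recover the explicit form \eqref{eq:nuLT}.

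First I would verify assumptions (A) and (B). Setting $G'(\hat m)=0$ in
\[
G(\hat m)=\tfrac{\beta}{2}(1-2\hat m)^2-\hat m\log\hat m-(1-\hat m)\log(1-\hat m)
\]
yields the mean-field equation $\tanh(\beta(1-2\hat m))=1-2\hat m$. For $\beta>1$ its three real roots are $\hat m=1/2$, which is a strict local minimum since $G''(1/2)=4(\beta-1)>0$, and $\hat m=(1\pm m^+)/2$, which lie in the interior of $[0,1]$ and satisfy $-G''=H>0$ with $H$ as in \eqref{eq:CW_LT_H}. Hence (A) holds with $p=2$ and interior maximizers $M_1=(1+m^+)/2$, $M_2=(1-m^+)/2$, and (B) holds with $H_1=H_2=H$. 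Moreover $M_{j,1}M_{j,2}=(1-(m^+)^2)/4$ is independent of $j$, so $w_1=w_2$ and $w_j/|w|=1/2$. Since $q-1=1$, the covariance $\Sigma^{(j)}$ in \eqref{eq:Gaussian_distribution} reduces to the scalar $H^{-1}-M_{j,1}(1-M_{j,1})=H^{-1}-(1-(m^+)^2)/4$, which after substituting $H$ equals the variance $r^2$ of \eqref{eq:CW_LT_var}; the constraint $\xi^{(j)}_2=-\xi^{(j)}_1$ is automatic for $q=2$.

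Second I would unfold \eqref{eq:nu}. For each $j=1,2$, the inner sum over $k$ reads
\[
\Big(M_{j,1}+\tfrac{\xi^{(j)}_1}{\sqrt N}\Big)\mathbbm 1_{-1}(\sigma_i)+\Big(M_{j,2}-\tfrac{\xi^{(j)}_1}{\sqrt N}\Big)\mathbbm 1_{+1}(\sigma_i)=\tfrac{1-\sigma_i \bar m_j}{2}-\sigma_i\tfrac{\xi^{(j)}_1}{\sqrt N},
\]
where $\bar m_1=-m^+$ and $\bar m_2=+m^+$. Since $\xi^{(j)}_1$ is a symmetric centered (truncated) Gaussian, its law is invariant under $\xi\mapsto -\xi$, and absorbing this sign change into $\E_\xi$ turns each summand into $\E_{\xi_N}\big[\prod_i((1+\sigma_i \bar m_j)/2+\sigma_i\xi_N/\sqrt N)\big]$. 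Summing the two summands with weights $1/2$ recovers exactly \eqref{eq:nuLT}, and Theorem \ref{theorem:main_gen} then yields both statements in \eqref{eq:convLT2}.

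I expect no substantive obstacle beyond bookkeeping: the entire argument is mechanical, and the only point requiring genuine care is tracking the sign conventions between $\hat m$, the signed magnetization $1-2\hat m$, and the coordinates $\xi^{(j)}_1,\xi^{(j)}_2$. The symmetry of the centered Gaussian is precisely what reconciles the slightly asymmetric general form \eqref{eq:nu} with the symmetric expression \eqref{eq:nuLT}.
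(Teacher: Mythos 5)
Your proposal is correct and takes essentially the paper's route: the paper obtains Corollary \ref{cor:main1c} as a direct specialization of Theorem \ref{theorem:main_gen}, with $p=2$, maximizers $(1\pm m^+)/2$ in the interior, $H_1=H_2$ as in \eqref{eq:CW_LT_H}, equal weights $w_1=w_2$ (hence mixture weights $1/2$), and $\Sigma=r^2$ as in \eqref{eq:CW_LT_var}, exactly as you verify. Only note the harmless sign slip in your intermediate display --- the per-spin factor is $\tfrac{1+\sigma_i\bar m_j}{2}-\sigma_i\xi^{(j)}_1/\sqrt N$ with $\bar m_j=1-2M_{j,1}$, not $\tfrac{1-\sigma_i\bar m_j}{2}-\sigma_i\xi^{(j)}_1/\sqrt N$ --- which cancels after your symmetric-Gaussian sign flip, so the final identification with \eqref{eq:nuLT} stands as stated.
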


The situation for $h \neq 0$, w.l.o.g. $h>0$, is similar to the situation where $\beta <1$ and $h=0$. There are either one or three solutions to \eqref{eq:tanh}, of which
the strictly positive one, called $m^+$ is the unique global maximum of the free energy. Then, in language of Theorem \ref{theorem:main_gen}
we are in the situation where $p=1$ and $M_1=(\frac{1+m^+}2,\frac{1-m^+ }2)$. Therefore, the values of $H_1$ are as in \eqref{eq:CW_LT_H}, the variance $r^2$ is given in \eqref{eq:CW_LT_var} and Theorem \ref{theorem:main_gen} implies the following approximation for the Curie-Weiss model with external field.
\begin{corollary}\label{cor:main1d}
If $h>0$ and $\beta$ is arbitrary, define $\xi_1$ as above and let $(\xi_N)$ again be its truncation as in Theorem \ref{theorem:main_gen}.
Moreover, for all $N$ let
\begin{align*}
\nu_N(\sigma):&= \E_\xi\Big[\prod_{i=1}^N \Big(\frac{1+m^+}2+\frac{\xi_N}{\sqrt N}\Big)\mathbbm 1_{+1}(\sigma_i)+ \Big(\frac{1-m^+ }2-\frac{\xi_N}{\sqrt N}\Big)\mathbbm 1_{-1}(\sigma_i)\Big]\\
&=\E_\xi\Big[\prod_{i=1}^N \frac{1+\sigma_i m^+}2+\sigma_i\frac{\xi_N}{\sqrt N}\Big]
\end{align*}
Then
\begin{equation} \label{eq:convLT}
H(\mu_N|\nu_N)\to 0 \qquad \mbox{as well as}\quad H(\nu_N|\mu_N)\to 0.
\end{equation}
\end{corollary}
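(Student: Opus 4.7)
The strategy is to deduce Corollary \ref{cor:main1d} directly from Theorem \ref{theorem:main_gen} applied with $q=2$, after verifying the hypotheses (A) and (B) in the Curie--Weiss setting with $h>0$, and then rewriting \eqref{eq:nu} in the compact one-dimensional form displayed in the statement. Following the parametrization at the start of Section \ref{sec:CW}, the free energy function reads $G(\hat m)=\tfrac{\beta}{2}(1-2\hat m)^2+h(1-2\hat m)-\hat m\log\hat m-(1-\hat m)\log(1-\hat m)$, and a direct differentiation gives $G'(\hat m)=-2\bigl(\beta(1-2\hat m)+h\bigr)+\log\tfrac{1-\hat m}{\hat m}$. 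Writing $z=1-2\hat m$, the equation $G'(\hat m)=0$ is equivalent to the fixed-point equation $z=\tanh(\beta z+h)$ of \eqref{eq:tanh}.

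First I would check (A). Since $G$ is smooth on $(0,1)$ and diverges to $-\infty$ at the endpoints, every global maximum lies in the interior. The key point is uniqueness when $h>0$: among all solutions of $z=\tanh(\beta z+h)$, only the largest one, $m^+$, realizes the global maximum of $G$. This is standard (cf.\ \cite{EllisEntropyLargeDeviationsAndStatisticalMechanics,BovierSMoDS}) and can be seen by comparing the values of $G$ at the distinct fixed points and observing that $\tfrac{d}{dh}(G(m^+)-G(m^-))$ has a definite sign, while at $h=0,\beta>1$ the two values coincide. Hence $p=1$ with $M_1=(\tfrac{1-m^+}{2},\tfrac{1+m^+}{2})$. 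For (B), differentiating once more yields $G''(\hat m)=4\beta-\bigl(\hat m(1-\hat m)\bigr)^{-1}$, so $H_1=-G''\bigl(\tfrac{1-m^+}{2}\bigr)=\tfrac{4(1-\beta(1-(m^+)^2))}{1-(m^+)^2}$, matching \eqref{eq:CW_LT_H}. Strict positivity reduces to $\beta(1-(m^+)^2)<1$, which is the usual transversality of the $\tanh$-curve with the diagonal at the largest crossing whenever $h>0$ (no critical-temperature degeneracy occurs off the single point $(\beta,h)=(1,0)$).

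Next I would translate \eqref{eq:nu} and \eqref{eq:Gaussian_distribution} into their one-dimensional forms. Since $p=1$, the weights cancel ($w_1/|w|=1$), and \eqref{eq:Gaussian_distribution} gives the scalar variance
\[\Sigma=H_1^{-1}-M_{1,1}(1-M_{1,1})=\frac{1-(m^+)^2}{4(1-\beta(1-(m^+)^2))}-\frac{1-(m^+)^2}{4}=r^2,\]
with $r^2$ as in \eqref{eq:CW_LT_var}. Setting $\xi_N:=\xi_2=-\xi_1$ (so that $\xi_N$ is the truncation of an $\mathcal N(0,r^2)$ variable) and plugging $M_{1,1}=\tfrac{1-m^+}{2}$, $M_{1,2}=\tfrac{1+m^+}{2}$ into \eqref{eq:nu}, the product over $i$ becomes $\bigl(\tfrac{1-m^+}{2}-\tfrac{\xi_N}{\sqrt N}\bigr)\mathbbm 1_{-1}(\sigma_i)+\bigl(\tfrac{1+m^+}{2}+\tfrac{\xi_N}{\sqrt N}\bigr)\mathbbm 1_{+1}(\sigma_i)=\tfrac{1+\sigma_i m^+}{2}+\sigma_i\tfrac{\xi_N}{\sqrt N}$, giving exactly the $\nu_N$ of the statement. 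Both convergences $H(\mu_N|\nu_N)\to 0$ and $H(\nu_N|\mu_N)\to 0$ are then immediate consequences of Theorem \ref{theorem:main_gen}.

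The main obstacle in this plan is the uniqueness claim of (A): the non-degeneracy (B) and the algebraic identification of $\Sigma$ with $r^2$ are bookkeeping, but uniqueness of the global maximizer of $G$ for arbitrary $\beta>0$ once $h>0$ is the content that genuinely relies on a monotonicity argument for the Curie--Weiss free energy rather than on the general framework of Section \ref{sec:General}.
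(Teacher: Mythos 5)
Your proposal is correct and follows essentially the same route as the paper: identify $p=1$ with the maximizer corresponding to $m^+$, compute $H_1$ as in \eqref{eq:CW_LT_H} and check that the scalar $\Sigma$ from \eqref{eq:Gaussian_distribution} equals $r^2$ from \eqref{eq:CW_LT_var}, and then invoke Theorem \ref{theorem:main_gen}. The paper states this specialization without further proof (taking the uniqueness and non-degeneracy of the maximizer at $m^+$ for $h>0$ as standard), so your explicit verification of (A) and (B), including the envelope-type monotonicity argument in $h$, only adds detail; the coordinate ordering of $M_1$ differs from the paper's but is immaterial by symmetry.
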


Finally, notice that Theorem \ref{theorem:main_gen} does not cover the critical case of case of $\beta=1, h=0$ in which case there
is a unique maximum of $G$ at $M_1=1/2$, but this minimum is degenerate, i.e.\ $G''(0)=0$.
While such a situation is technically cumbersome to treat in general (see Remark \ref{rem:generalization}), 
in the case of the Curie-Weiss model we do have
a version of Theorem \ref{theorem:main_gen} at $\beta=1, h=0$, due to its explicitly given description.

Here we are inspired by the fact that the
phase transition in the Curie-Weiss model is also visible on the level of fluctuations:
While for $h=0$, $\beta<1$ in the limit $N \to \infty$, $\sqrt N \tilde m_N$ has Gaussian fluctuations under $\mu_N$ with expectation $0$ and variance $1/(1-\beta)$, of course, this cannot be true for $h=0,\beta=1$. In this
case, the distribution of $N^{1/4} \tilde m_N$ has a limiting density
\be
f_1(x)=\frac{\exp(-\frac 1{12} x^4)}{\int_\R \exp(-\frac 1{12} y^4)dy}
\ee
(see e.g.\ \cite{Ellis_Newman_78b}).
This is not only interesting in its own right, it also hints at which mixture of product measures we should compare the Curie-Weiss distribution to, when $\beta=1$ and $h=0$:
\begin{theorem}\label{theo:mainCW1b}
If $h=0,\beta=1$, let $(\xi_N)$ be a sequence of random variables with $\xi_N:=\xi \ind_{|\xi|\le N^\delta}$ for some small enough $0<\delta<\frac 1{20}$ where $\xi$ is a random
variable that has a distribution with density $f_1$.
Moreover, for all $N$ let
\be \label{eq:nucritT}
\nu_N(\sigma):= \E_{\xi_N} \prod_{i=1}^N \frac 12 \left(1+\frac{\xi_N \sigma_i}{\sqrt[4] N}\right)
\ee
Then
\be \label{eq:convcritT}
H(\mu_N|\nu_N)\to 0 \qquad \mbox{as well as}\quad H(\nu_N|\mu_N)\to 0.
\ee
\end{theorem}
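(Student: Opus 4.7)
The plan is to adapt the three-step scheme of the proof of Theorem \ref{theorem:main_gen} to the quartic degenerate critical point at $\beta=1$, $h=0$, following Remark \ref{rem:generalization} with $\ell=4$. I would work directly in the $\{-1,+1\}$-parametrization: for $S := \tilde m_N(\sigma) \in [-1,1]$ and $N_\pm := N(1\pm S)/2$, the push-forwards under $\tilde m_N$ read
$$\tilde\mu_N(S) = \frac{1}{Z_N}\binom{N}{N_+}e^{NS^2/2}, \qquad \tilde\nu_N(S) = \frac{1}{2^N}\binom{N}{N_+}E_{\xi_N}\!\bigl[(1+\xi_N/N^{1/4})^{N_+}(1-\xi_N/N^{1/4})^{N_-}\bigr].$$
Set $\tilde S := N^{1/4} S$, the natural critical scale from Ellis-Newman, and recall that the function $G$ from the general framework satisfies $G(1/2+u) = \log 2 - \tfrac{4}{3} u^4 + O(u^6)$ at $\beta=1, h=0$.

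First, I would redo Lemma \ref{lem:partitionfunction}. Stirling's approximation combined with Laplace's method on the quartic valley $\int_\R e^{-4Nu^4/3}du = 2\Gamma(5/4)(3/(4N))^{1/4}$ produces
$$Z_N \sim D \cdot N^{1/4} \cdot 2^N, \qquad D := 2\sqrt{2/\pi}\,(3/4)^{1/4}\,\Gamma(5/4).$$
Next, for the density-ratio step (the analog of Lemma \ref{lem:dichtequotient}), I would show $d\tilde\mu_N/d\tilde\nu_N(S) \to 1$ uniformly for $|\tilde S| = o(N^\delta)$. A fifth-order Taylor expansion of $\log(1\pm \xi/N^{1/4})$ yields, for $|\xi|\le N^\delta$,
$$N_+\log(1+\xi/N^{1/4}) + N_-\log(1-\xi/N^{1/4}) = \sqrt N\,\tilde S\xi - \tfrac{\sqrt N}{2}\xi^2 + \tfrac{\tilde S\xi^3}{3} - \tfrac{\xi^4}{4} + o(1).$$
Combining with $e^{-NS^2/2} = e^{-\sqrt N\tilde S^2/2}$ and absorbing the density $\propto e^{-\xi^4/12}$ of $\xi_N$ (so that $-\xi^4/4 - \xi^4/12 = -\xi^4/3$), one obtains
$$\frac{\tilde\nu_N}{\tilde\mu_N}(S) = \frac{D\,N^{1/4}}{Z_\xi}\int_{-N^\delta}^{N^\delta}\exp\!\Bigl(-\tfrac{\sqrt N}{2}(\tilde S-\xi)^2 + \tfrac{\tilde S\xi^3}{3} - \tfrac{\xi^4}{3} + o(1)\Bigr)d\xi,$$
where $Z_\xi = \int_\R e^{-y^4/12}dy = 12^{1/4}\cdot 2\Gamma(5/4)$. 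The crucial algebraic cancellation is that at $\xi = \tilde S$ the quartic residual vanishes: $\tilde S \cdot \tilde S^3/3 - \tilde S^4/3 = 0$. A Gaussian Laplace expansion around $\xi^* = \tilde S$ with effective width $N^{-1/4}$ (from the $-\sqrt N(\tilde S - \xi)^2/2$ factor) gives an integral asymptotic to $\sqrt{2\pi}\cdot N^{-1/4}$, and the explicit constant check
$$\frac{D\sqrt{2\pi}}{Z_\xi} = \frac{2\cdot (3/4)^{1/4}}{12^{1/4}} = 2\cdot (1/16)^{1/4} = 1$$
confirms the ratio tends to $1$ uniformly on the stated range.

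The tail step would mirror the conclusion of the proof of Theorem \ref{theorem:main_gen}: for $|\tilde S| > K N^\delta$, the quartic local decay $G(1/2+u) - \log 2 \le -cu^4$ combined with the strict negativity of $G - \sup G$ outside a neighborhood gives $\tilde\mu_N(S)\lesssim \mathrm{poly}(N)\,e^{-cN^{4\delta}}$ by the same argument as \eqref{eq:mu_MDP}; the representation \eqref{eq:bound_nu_tilde} together with the quadratic lower bound $H(m \mid 1/2\pm\xi/N^{1/4}) \ge c(\tilde S - \xi)^2/\sqrt N$ yields an analogous bound on $\tilde\nu_N(S)$ uniformly in $|\xi|\le N^\delta$. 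Polynomial control of $|\log(\tilde\mu_N/\tilde\nu_N)|$ via the truncation then makes the tail contributions $o(1)$ in both $H(\tilde\mu_N|\tilde\nu_N)$ and $H(\tilde\nu_N|\tilde\mu_N)$.

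The hard part is the quartic Laplace analysis in the density-ratio step. Unlike in Theorem \ref{theorem:main_gen}, the integral admits no linear completion-of-squares, and the vanishing of the quartic residual at $\xi = \tilde S$ depends on a delicate three-way balance between the cubic log-term $\tilde S\xi^3/3$, the quartic log-term $-\xi^4/4$, and the imposed density term $-\xi^4/12$ from $f_1$. The coefficient $1/12$ -- precisely the one arising from the Ellis-Newman limit law of $N^{1/4}\tilde m_N$ -- is what makes the telescope work; any other choice would leave a residual $\tilde S^4$-term that would obstruct convergence. Carefully tracking the fifth- and sixth-order Taylor remainders in both $\xi$ and $\tilde S$ inside the Gaussian window of width $N^{-1/4}$ is what pins down the stringent constraint $\delta < 1/20$.
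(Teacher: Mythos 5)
Your proposal is correct and takes essentially the same route as the paper's own proof: a fourth-order Laplace evaluation of $Z_N$, a fourth/fifth-order Taylor expansion of the logarithm combined with the $e^{-x^4/12}$ density of $f_1$ so that the cubic and quartic terms cancel at the Gaussian peak $\xi=\tilde S$ (the paper phrases this step as dominated convergence after substituting $y=xN^{1/4}$, you as a Laplace expansion around the cancellation point, with the same constraint $5\delta<1/4$ and matching constants), followed by moderate-deviation-type tail bounds and crude polynomial control of the log-ratio. The only cosmetic deviations are that you obtain the $\tilde\mu_N$-tail directly from Stirling and the quartic decay of $G$ where the paper cites the MDP of \cite{EL04}, and that your window split ($|\tilde S|=o(N^\delta)$ versus $|\tilde S|>KN^\delta$) needs the same ``choose a slightly smaller exponent'' bookkeeping the paper itself uses.
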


\begin{proof}
In the notation of this section $m\in[-1,1]$, we have
\begin{align}\label{eq:quotient}
\frac{\tilde \nu_N(m)}{\tilde \mu_N(m)}&=
\frac{Z_N 
\E_{\xi_N} \left[\left(1+\frac{\xi_N}{\sqrt[4] N}\right)^{\frac{1+m}2N}\left(1-\frac{\xi_N}{\sqrt[4] N}\right)^{{\frac{1-m}2N}}\right]}
{2^{N} \exp\left(\frac 1{2} Nm^2\right)}
\end{align}
Note that the number of positive spins is a sufficient statistic for $\frac{\nu_N}{\mu_N}$ and $\frac{\mu_N}{\nu_N}$, a fact which we will use later. We will follow the route of the proof of Section \ref{sec:General} (In particular we show analogues of Lemma \ref{lem:partitionfunction}), Lemma \ref{lem:dichtequotient} and of the proof of Theorem \ref{theorem:main_gen}).

Let us first compute the well-known asymptotics of $Z_N$ for $\beta=1$, which can be found for instance in \cite{M-L82}, \cite{bolthausen_laplaceii}, and \cite{BB90}. Following the route of Lemma \ref{lem:partitionfunction}, we now provide a self-contained argument.
Recall that $G(\hat m)=\tfrac 1 2 (1-2\hat m)^2-\hat m\log(\hat m)-(1-\hat m)\log(1-\hat m)$, $\hat m=(1-m)/2\in [0,1]$, whose critical point is $M=1/2$ with value $G(1/2)=\log 2$. The first three derivatives at $1/2$ vanish and $G^{(4)}(1/2)=-32$. Similar to the proof of Lemma \ref{lem:partitionfunction}, we apply Laplace's method (now, of fourth order)
\begin{align*}
Z_N&\sim\int_0^1\exp\big(NG(x)\big)\frac{N^{(q-1)/2}}{\sqrt{(2\pi)^{q-1} \prod_{k=1}^q x_k }} dx\\
&=\int_0^1\exp\big(NG(1/2)+\frac N {4!}G^{(4)}(1/2)(x-1/2)^4+No(x-1/2)^4\big)\frac{N^{1/2}}{\sqrt{2\pi x(1-x) }} dx \\
&\sim\frac{2^{N}}{\sqrt{ 2 \pi}}  N^{\frac 14} \int_{-\infty}^\infty e^{-\frac 1 {12} u^4} du=c2^NN^{1/4}
\end{align*}
where we changed variables to $u=(2x-1)N^{1/4}$ and denoted $c=3^{1/4}\Gamma(1/4)/\sqrt \pi$.
Similar to \eqref{eq:dichtequotientexpexp} we expand the logarithm in a Taylor expansion (now to fourth order and using $N(\xi_N /\sqrt[4]{N})^5\to 0$), yielding
\begin{align}\label{eq:Dichtequotient_CWcrit}
    \frac {\tilde\nu_N}{\tilde \mu_N}(m)&=
\frac{Z_N 
\E_{\xi_N} \left[\left(1+\frac{\xi_N}{\sqrt[4] N}\right)^{\frac{1+m}2N}\left(1-\frac{\xi_N}{\sqrt[4] N}\right)^{{\frac{1-m}2N}}\right]}
{2^{N} \exp\left(\frac 1{2} Nm^2\right)}\nonumber\\
&\sim cN^{1/4}e^{-\tfrac 1 2 Nm^2}\E_{\xi_N} \Bigg[\exp\left(\frac{1+m}2N\left(\frac{\xi_N}{N^{1/4}}-\frac{\xi_N^2}{2N^{1/2}}+\frac{\xi_N^3}{3N^{3/4}}-\frac{\xi_N^4}{4N} \right)\right)\nonumber\\
&\exp\left(\frac{1-m}2N\left(-\frac{\xi_N}{N^{1/4}}-\frac{\xi_N^2}{2N^{1/2}}-\frac{\xi_N^3}{3N^{3/4}}-\frac{\xi_N^4}{4N} \right)\right)\Bigg]\nonumber\\
&=ce^{-\frac 1 2 Nm^2}\int_{-N^\delta}^{N^\delta} \exp\left(N^{3/4}mx-\frac{N^{1/2}}2x^2+\frac{N^{1/4}m}{3}x^3-\frac{x^4}{4}\right)f_1(x)N^{1/4}dx\nonumber\\
&=\frac 1{\sqrt{2\pi}}\int_{-N^{1/4+\delta}}^{N^{1/4+\delta}} \exp\left(-\frac{1}2(y-N^{1/2}m)^2+\frac{m}{3N^{1/2}}y^3-\frac{y^4}{4N}-\frac{y^4}{12N}\right)dy,
\end{align}
where in the last step we plugged in the explicit density $f_1$, completed the square and changed variables to $y=xN^{1/4}$. We will only need to consider $|m|<N^{-1/4+\delta/2}$, for which the integrand converges pointwise to a shifted Gaussian (and the shift stays in the interior of the integration area). Analogously to \eqref{eq:quadr_erg}, we obtain by dominated convergence
\begin{align}\label{eq:criticalDichte}
    \frac {\tilde\nu_N}{\tilde \mu_N}(m)= 1+o(1)
    \end{align}
uniformly in $|m|<N^{-1/4+\delta/2}$.

Now we can finish the proof by a moderate deviations argument, similar to \eqref{eq:mu_MDP}.
Indeed, Example 2.1 of \cite{EL04} (with $\alpha=7/8$) states that $N^{1/8}\tilde m_N$ satisfies a MDP with speed $N^{1/2}$ and rate function $\tfrac 1 {12}x^4$, hence for $A=\R\setminus (-N^{-1/8+\delta/2},+N^{-1/8+\delta/2})$ it holds
\begin{align*}
\tilde \mu_N(|m|\ge N^{-1/4+\delta/2})\le \exp\big(-N^{1/2}\inf_{x\in A }\tfrac 1 {12} x^4\big)= \exp(-\tfrac 1 {12}N^{2\delta}).
\end{align*}
Moreover, the calculation \eqref{eq:Dichtequotient_CWcrit} still implies a very rough bound on the density: For some constants $c,C>0$ we have
\begin{align*}
\exp(-cN^2) < \frac {\tilde\nu_N(m)}{\tilde \mu_N(m)}< \exp(CN^2).
\end{align*}
We conclude that
\begin{align*}
H(\tilde \mu_N|\tilde \nu_N)&=\int_{|m|< N^{-\frac 14+\frac\delta 2}}\log \left(\frac{\tilde \mu_N(m)}{\tilde \nu_N(m)}\right)d\tilde \mu_N(m)+
\int_{|m|\ge N^{-\frac 14+\frac\delta 2}} \log \left(\frac{\tilde \mu_N(m)}{\tilde \nu_N(m)}\right)d\tilde \mu_N(m)\to 0.
\end{align*}
In the same way as in \eqref{eq:bound_nu_tilde}, one may also verify that $\tilde \nu_N(|m|\ge N^{-1/4+\delta/2})\le \exp(- cN^{2\delta})$, which implies the claim $H(\tilde \nu_N|\tilde \mu_N)\to 0$.
\end{proof}

\section*{Acknowledgement}
The authors are funded by the Deutsche Forschungsgemeinschaft (DFG, German Research Foundation) under Germany's Excellence Strategy EXC 2044 - 390685587, Mathematics M\"unster: \emph{Dynamics-Geometry-Structure}. JJ and ZK have been supported by the DFG priority program SPP 2265 \emph{Random Geometric Systems}.


\end{document}